\documentclass[12pt, reqno]{amsart}
\usepackage{amsmath}
\usepackage{amssymb}
\usepackage{epsfig}
\usepackage{graphicx}
\usepackage{color}
\usepackage{fullpage}
\usepackage{comment}
\definecolor{shadecolor}{gray}{0.875}
\usepackage{amscd}
\usepackage{stmaryrd}
\usepackage{amsmath}
\usepackage{ulem}
\usepackage{amsthm}
\usepackage{mathrsfs}
\usepackage{array}
\usepackage{booktabs}

\usepackage{csquotes}
\usepackage{enumitem}
\usepackage{graphics}
\usepackage{tikz-cd}

\usepackage{array}
\usepackage{booktabs}
\usepackage{makecell}
\usepackage{threeparttable}
\usepackage{longtable}

\usepackage{xcolor}
\definecolor{darkred}{RGB}{120,0,0}
\definecolor{darkgreen}{RGB}{0,90,45}

\usepackage[pagebackref]{hyperref}
\hypersetup{
	colorlinks=true,
	linkcolor=darkred,    % 定理、章节、公式等交叉引用
	citecolor=darkgreen,  % 文献引用
	urlcolor=darkred      % 网址
}

\usepackage{textcomp}

\let\oldtocsection=\tocsection

\let\oldtocsubsection=\tocsubsection

\let\oldtocsubsubsection=\tocsubsubsection

\renewcommand{\tocsection}[2]{\hspace{0em}\oldtocsection{#1}{#2}}
\renewcommand{\tocsubsection}[2]{\hspace{1em}\oldtocsubsection{#1}{#2}}
\renewcommand{\tocsubsubsection}[2]{\hspace{2em}\oldtocsubsubsection{#1}{#2}}
\makeatletter

\newcommand{\Rmnum}[1]{\expandafter\@slowromancap\romannumeral #1@}
\makeatother
\numberwithin{equation}{section}

\input xy
\xyoption{all}

\calclayout
\allowdisplaybreaks[3]

\theoremstyle{plain}
\newtheorem{prop}{Proposition}[section]

\newtheorem{coro}[prop]{Corollary}

\newtheorem{lemm}[prop]{Lemma}

\theoremstyle{definition}
\newtheorem{defi}[prop]{Definition}

\newtheorem{rema}[prop]{Remark}

\newtheorem{maintheorem}{Theorem}
\newtheorem{maincorollary}[maintheorem]{Corollary}

\def\ra{\rightarrow}

\def\bC{{\mathbb C}}

\def\bP{{\mathbb P}}

\def\bZ{{\mathbb Z}}

\def\Pic{\mathrm{Pic}}

\def\Pic{\mathrm{Pic}}

\newcommand{\defterm}[1]{\textit{#1}}

\makeatother
\makeatletter

\author{Runxuan Gao}
\address{Graduate School of Mathematics, Nagoya University, Furocho Chikusa-ku, Nagoya, 464-8602, Japan}
\email{m20015x@math.nagoya-u.ac.jp}

\title[Generalized Iskovskikh surfaces]{Generalized Iskovskikh surfaces and unirationality of del Pezzo surfaces of degree $2$}

\begin{document}
\date{\today}
\begin{abstract}
	We introduce generalized Iskovskikh surfaces and study their relation to quasi-\'etale double covers. As an application, we prove that a del Pezzo surface of degree $2$ over an infinite field of characteristic different from $2$ is unirational if it has a rational generalized Eckardt point. This resolves a previously open case not covered by Manin's construction.
\end{abstract}
\maketitle
\setcounter{tocdepth}{2}
%\tableofcontents

\section*{Introduction}
A variety $X$ defined over a field $k$ is \defterm{unirational}
if there is a dominant rational map $\bP^n \dasharrow X$
defined over $k$ for some $n$.
It is \defterm{rational} if it is birational over $k$ to $\bP^n$.
A \defterm{del Pezzo surface} $S$ is a smooth projective surface
whose anticanonical line bundle is ample.
Its \defterm{degree} is $d = (-K_S)^2$.
Every del Pezzo surface is geometrically rational,
but it need not be rational over its field of definition.

For a smooth projective variety, the existence of a $k$-rational
point is a necessary condition for $k$-(uni)rationality.
It is therefore natural to ask whether this condition is
sufficient for del Pezzo surfaces.
The study of this question goes back to the classical work
of Manin \cite{manin1986cubicforms}.
It is known that a del Pezzo surface $S$ with
$S(k)\neq\varnothing$ is $k$-rational if $d\geq 5$,
and $k$-unirational if $d\geq 3$.
For $d\leq 4$, such a surface need not be $k$-rational.
However, we still do not know any example of a
del Pezzo surface with a $k$-rational point
that is not $k$-unirational.

The unirationality problem for del Pezzo surfaces of degrees $1$ and $2$
has not yet been completely resolved.
For recent progress in degree $1$,
see \cite{cheltsov2026unirational}.
In degree $2$, Manin proved unirationality under suitable
conditions on a $k$-rational point.
Let $\pi:S\to\bP^2$ be the double cover induced by $|-K_S|$,
and let $R$ be its ramification divisor.
A geometric point of $S$ is called a
\defterm{generalized Eckardt point} if it lies on four
$(-1)$-curves.
Salgado, Testa, and V\'arilly-Alvarado \cite[Corollary~3.3]{STVA14}
show that Manin's construction
proves the $k$-unirationality of $S$ whenever $S$ has a
$k$-rational point lying outside $R$ that is not a
generalized Eckardt point.
This leaves two types of points untreated:
points on $R$ and generalized Eckardt points.

In this paper, we resolve the generalized Eckardt point case
over infinite fields of characteristic different from $2$.

\begin{maintheorem}\label{theo:main}
	Let $S$ be a del Pezzo surface of degree $2$ over an infinite
	field $k$ of characteristic different from $2$.
	If $S$ has a $k$-rational point lying on four $(-1)$-curves
	of $S_{\overline{k}}$, then $S$ is unirational over $k$.
\end{maintheorem}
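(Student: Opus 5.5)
We will reduce the statement to a structural description of $S$ near the point and then produce an explicit dominant rational map. Write $S$ as the double cover $w^2=f(x,y,z)$ of $\bP^2$ branched along a smooth quartic $B$, and let $p\in S(k)$ be the given generalized Eckardt point. First we show that $p$ lies on the ramification curve $R$. Each $(-1)$-curve through $p$ maps isomorphically onto a bitangent line of $B$, and the four bitangents pass through the image point $q=\pi(p)\in\bP^2(k)$. If $q\notin B$, the fibre over $q$ consists of two points, and the four $(-1)$-curves through $p$ would have partners under the Geiser involution, namely the preimages of the same lines, passing through the conjugate point. A count of intersection numbers shows this is impossible in the configuration of $56$ lines unless the four bitangents are concurrent at a point of $B$. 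Hence $q\in B$, and $q$ is a point where four bitangents of the quartic $B$ meet. Note that the tangent line to $B$ at $q$ is not among them, since it meets $B$ at $q$ with multiplicity at least $2$, and a bitangent through $q$ is tangent at two other points.

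Next we choose coordinates. We put $q=(0:0:1)$ and use the pencil of lines through $q$. Projecting from $p$ expresses $S$ as a conic bundle after blowing up $p$. The preimage of a line $L_t$ through $q$ is a curve of arithmetic genus $1$ that is singular at $p$ whenever $L_t$ is tangent to $B$ at $q$. In general, $\pi^{-1}(L_t)$ is the double cover of $L_t$ branched at $q$ and at three further points. After blowing up $p$, we obtain a genus one fibration $\tilde S\to\bP^1_t$ with a $k$-rational section, namely the exceptional curve of $p$. The four bitangents through $q$ give the fibres where the other three branch points collapse to $q$ plus a double point. These are nodal rational fibres, and their components are the $(-1)$-curves.

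The idea is to exploit this elliptic fibration with zero section $E_p$. Concretely, the generic fibre is an elliptic curve $\mathcal E$ over $k(t)$ given by $w^2=g_t(s)$ with $\deg g_t=3$, in Weierstrass-type form $w^2=a(t)s^3+b(t)s^2+c(t)s+d(t)$, because one branch point sits at infinity ($q$). The key step is to find a base change $t=\phi(u)$ over which $\mathcal E$ acquires a non-torsion point, or a point of $S$ not on $R$ and not a generalized Eckardt point. The latter suffices by \cite[Corollary~3.3]{STVA14}. Since $k$ is infinite, it is enough to produce a single $k$-point $p'\in S\setminus R$ that is not a generalized Eckardt point. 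Only finitely many generalized Eckardt points exist, at most the $\binom{56}{4}$ configurations, so it suffices to show that $S(k)\setminus R$ is Zariski dense, or merely infinite. For this we use the tangent-line construction at $p$. The tangent direction at $p$ along a $k$-rational line $L_t$ through $q$ gives a curve $\pi^{-1}(L_t)$ of genus at most $1$ with the $k$-point $p$. We take $L_t$ to be the tangent line $T_qB$, which is $k$-rational. Then $\pi^{-1}(T_qB)$ is singular at $p$, hence a rational curve with a $k$-rational singular point. It is therefore $k$-rational, giving infinitely many $k$-points outside $R$. Removing finitely many bad points then yields the required $p'$.

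The main obstacle is the degenerate case where $T_qB$ is a flex or hyperflex tangent, or is itself a bitangent. In that case $\pi^{-1}(T_qB)$ may split into two $(-1)$-curves that are conjugate over a quadratic extension, or may have a cusp, and the argument above yields no points. We expect the four concurrent bitangents to force a special shape of $f$, for example $f=z^3\ell+\dots$ with a symmetry. Our plan for this case is to use a second curve instead: the preimage of a conic in $\bP^2$ through $q$ with high contact with $B$ at $q$, chosen from a $k$-rational family, which is again singular at $p$ and of geometric genus $0$. The hardest verification is that such a conic family exists whose members are not components of $\pi^{-1}$ of bitangents, and this is where the hypotheses that $k$ is infinite and of characteristic different from $2$ would be used.
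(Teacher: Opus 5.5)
\textbf{The proposal fails at its first step: a generalized Eckardt point never lies on $R$.}

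Suppose $q=\pi(p)\in B$ and $L$ is a bitangent through $q$. Then $L\cdot B=2P_1+2P_2$ (or $4P$) forces $q\in\{P_1,P_2\}$, so $L=T_qB$. Your own description, with bitangents through $q$ tangent at two \emph{other} points, gives $L\cdot B\ge 5$, which contradicts B\'ezout. Hence at most one bitangent passes through a point of $B$, and a point of $R$ lies on at most two $(-1)$-curves.

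The ``count of intersection numbers'' you invoke does not work. With $q\notin B$ and $p'$ the Geiser conjugate of $p$, the other components $C_i'\sim -K_S-C_i$ over the four bitangents are four $(-1)$-curves through $p'$ with $C_i'\cdot C_j'=1$. So $p'$ is simply a second generalized Eckardt point; this is \cite[Example~2.4]{STVA14}, used in Lemma~\ref{lemm:eckardt}.

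Everything after that rests on the wrong picture:
\begin{itemize}
\item Since $\pi$ is \'etale at $p$, no line through $q$ has preimage singular at $p$.
\item The pencil of lines through $q$ also has base point $p'$, so blowing up $p$ alone does not give a fibration.
\item There is no $k$-rational curve through $p$ of the kind you want.
\item Even in your hypothetical setting, ``a rational curve with a $k$-rational singular point is $k$-rational'' is false. If the two branches at the node are conjugate, the normalization is a conic that may have no $k$-point, e.g.\ $w^2=-s^2(1+s^2)$ over $\mathbb R$. If that step worked, it would settle the case of $k$-points on $R$, which the paper notes is still open.
\item The paragraph on the degenerate case is only a plan.
\end{itemize}

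\textbf{What is actually missing.} The point $p$ is off $R$, yet it is exactly the kind of point where Manin's construction \cite[Corollary~3.3]{STVA14} fails. So one must produce a good $k$-point on some other surface that dominates $S$. The paper does this in four steps.
\begin{enumerate}
\item Blow up $p$ and contract the four strict transforms $\widetilde C_i$, which are now $(-2)$-curves. This gives a degree-one Du Val del Pezzo surface $S_2$ with $4A_1$, of type $+$: the exceptional curve $E_x$ is the distinguished curve.
\item The relation $\sum C_i\sim-2K_S$ makes $\tfrac12\sum\widetilde C_i$ a $2$-torsion class on the smooth locus. This yields a quasi-\'etale double cover $T\to S_2$ by a smooth degree-two del Pezzo surface.
\item Twisting by the class of the fibre over a suitable $k$-point on the image of $E_x$ gives $T^\sigma$ with a $k$-point off its ramification curve and not a generalized Eckardt point.
\item Hence $T^\sigma$ is unirational by Manin's construction, and it dominates $S_2$, which is $k$-birational to $S$.
\end{enumerate}
Your proposal contains no substitute for this passage to an auxiliary surface, which is the key idea of the proof.
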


Combining Theorem~\ref{theo:main} with the known
unirationality results \cite[Corollary~3.3]{STVA14} and \cite[Theorem~1.1]{FvL16}, we obtain the following corollary.

\begin{maincorollary}\label{coro:unirationality}
	Let $S$ be a smooth del Pezzo surface of degree $2$
	over a field $k$.
	Suppose that either
		$k$ is finite, or
		$k$ is infinite of characteristic different from $2$
		and $(S\setminus R)(k)\neq\varnothing$, where $R$
		is the ramification divisor of the anticanonical morphism.
	Then $S$ is $k$-unirational.
\end{maincorollary}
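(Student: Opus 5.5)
The corollary is a case analysis that assembles Theorem~\ref{theo:main} with the two cited results. No new geometry is needed. The only care required is to check that the hypotheses of each cited result exactly cover the remaining cases, and in particular that points lying off $R$ split cleanly into generalized Eckardt points and the rest.

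\emph{Case 1: $k$ finite.} A del Pezzo surface of degree $2$ over a finite field always has a $k$-point (for instance by the Chevalley--Warning theorem or by point counting), and the unirationality of every such surface over a finite field is precisely the content of \cite[Theorem~1.1]{FvL16}. So I will simply invoke that theorem. Characteristic $2$ causes no difficulty, since the cited theorem has no restriction on the characteristic.

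\emph{Case 2: $k$ infinite, $\mathrm{char}\, k\neq 2$, and there is a point $P\in (S\setminus R)(k)$.} I split according to the geometry of $P$ over $\overline{k}$. Let $r$ be the number of $(-1)$-curves of $S_{\overline{k}}$ through $P$.
\begin{enumerate}
	\item If $r=4$, then $P$ is a $k$-rational generalized Eckardt point, and Theorem~\ref{theo:main} gives $k$-unirationality directly. Here I use that the theorem does not require $P\notin R$.
	\item If $r\neq 4$, then $P$ lies outside $R$ and is not a generalized Eckardt point. This is exactly the hypothesis of \cite[Corollary~3.3]{STVA14}, which is stated over infinite fields of characteristic $\neq 2$, and that corollary gives $k$-unirationality.
\end{enumerate}
The definition of generalized Eckardt point in the paper is ``lies on four $(-1)$-curves''. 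So the dichotomy $r=4$ versus $r\neq 4$ partitions the points of $(S\setminus R)(k)$ exactly into the inputs of Theorem~\ref{theo:main} and of \cite[Corollary~3.3]{STVA14}.

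The main thing to verify is that the hypotheses of \cite[Corollary~3.3]{STVA14} match the present setup: an infinite field, characteristic not $2$, and a point off $R$ that is not a generalized Eckardt point. I also need to check that its notion of generalized Eckardt point agrees with the one used here. If that corollary were stated only for the ramification locus of the anticanonical double cover $\pi$, this is the same $R$ as in our statement, so no adjustment is needed. Beyond this bookkeeping, the proof is immediate.
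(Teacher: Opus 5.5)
Your proof is correct and follows the paper's argument exactly: you use \cite[Theorem~1.1]{FvL16} when $k$ is finite, and when $k$ is infinite you split $P\in(S\setminus R)(k)$ into the generalized Eckardt case (Theorem~\ref{theo:main}) and the remaining case (\cite[Corollary~3.3]{STVA14}). Your aside on the existence of $k$-points over finite fields is harmless but unnecessary, since \cite[Theorem~1.1]{FvL16} has no such hypothesis.
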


Our construction is based on a generalization of
classical Iskovskikh surfaces.
For an intersection of two quadrics in $\bP^4$
with exactly two singular points, both of type $A_1$,
the line joining them may or may not lie on the surface.
If this line is not contained in the surface and
the two singular points are Galois conjugate,
the surface is called an Iskovskikh surface
\cite[p.~74]{CorayTsfasman1988}.
%Over a perfect field, these are the only singular
%Du Val del Pezzo surfaces of degree $4$ with a smooth
%$k$-rational point that can fail to be $k$-rational;
%see \cite[Theorem~7.2]{CorayTsfasman1988}.

We generalize this distinction to Du Val del Pezzo
surfaces of degree at most $4$ whose degree and
singularity type do not uniquely determine the
configuration of their $(-1)$-curves.
These generalized Iskovskikh surfaces
(Definition~\ref{defi:isk}) come in two geometric types
for each degree and singularity type
(Proposition~\ref{prop:two-embeddings}).
They are distinguished by the existence of a
$(-1)$-curve passing through certain singular points in a certain way; see
Proposition~\ref{prop:distinguished-curve}.
We call the surface of type $+$ when this
distinguished curve exists, and of type $-$ otherwise.
When $\operatorname{char}k\neq2$, a generalized
Iskovskikh surface of degree $1$ is of type $+$
if and only if it admits a quasi-\'etale double
cover over $k$
(Lemma~\ref{lemm:torsion} and
Corollary~\ref{coro:cover}).

To prove Theorem~\ref{theo:main}, we blow up the given
generalized Eckardt point and contract the strict
transforms of the four $(-1)$-curves.
The resulting surface $S_2$ is a generalized Iskovskikh
surface of degree $1$ and type $+$.
It therefore admits a quasi-\'etale double cover
by a smooth del Pezzo surface $T$ of degree $2$.
A suitable quadratic twist of $T$ admits a
$k$-rational point to which Manin's construction applies.
This construction is summarized in \eqref{diag}.

It is worth mentioning that over $\bC$,
the surface $T$ also admits generalized Eckardt points.
The anticanonical branch quartics of $S$ and $T$
are related by Pantazis's bigonal construction
(Proposition~\ref{prop:inverse-construction}).
Thus our construction provides a geometric realization
of bigonal duality in terms of degree-two del Pezzo surfaces.

The paper is organized as follows.
Section~\ref{sec:1} studies generalized Iskovskikh surfaces
and their quasi-\'etale double covers.
Section~\ref{sec:2} proves Theorem~\ref{theo:main} and Section~\ref{sec:3} discusses
the relation of our construction to bigonal duality.

\noindent\textbf{Acknowledgments.}
The author thanks Colliot-Th{\'e}l{\`e}ne
for pointing out \cite{CorayTsfasman1988}
at the conference Birational Geometry and Number Theory,
held in Edinburgh in 2024,
and Deqi Zhang for answering his question on quasi-\'etale covers
and referring him to \cite{EsnaultViehweg}
at the conference Birational Geometry and Algebraic Dynamics,
held at the University of Tokyo in November 2023.
He also thanks Hiroto Akaike, from whom he learned of
Kowalevski's result during a conversation.
The author was partially supported by a Grant-in-Aid
for JSPS Fellows (Grant No.~24KJ1234).

\noindent\textbf{AI disclosure.}
The notion of generalized Iskovskikh surfaces and the proof
of Theorem~\ref{theo:main} were developed and recorded in
the author's private notes during his work on
\cite{gao2024quasi}, without the use of AI tools.
In preparing this paper, the author used ChatGPT-6 Astra
to help locate references, proofread the manuscript,
and improve the language.
The paper itself was written by the author
who is responsible for its correctness.

\section{Generalized Iskovskikh surfaces}\label{sec:1}

%Throughout the paper, $k$ is a field of characteristic
%different from $2$.
A \defterm{Du Val del Pezzo surface} over a field $k$ is
a projective geometrically integral surface $S$
with ample anticanonical divisor such that
$S_{\bar k}$ has only Du Val singularities.

Let $S$ be a Du Val del Pezzo surface of degree
$1\le d\le6$ over $k$, and let
$\rho:X\to S_{\bar k}$ be the minimal resolution.
Then $X$ is a weak del Pezzo surface obtained by successively
blowing up $9-d$ possibly infinitely near points of $\mathbb P^2$.
The Picard group of $X$ is
\[
\operatorname{Pic}(X)
= \mathbb Z h \oplus \bigoplus_{i=1}^{9-d}\mathbb Z e_i,
\]
where $h$ is the pullback of the class of a line and the $e_i$
are the total transforms of the exceptional divisor classes.
These classes satisfy
\begin{equation}\label{eq:basis}
	h^2=1,\qquad h\cdot e_i=0,\qquad
	e_i\cdot e_j=-\delta_{ij},
	\qquad K_X=-3h+\sum_{i=1}^{9-d}e_i.
\end{equation}
A class $\alpha\in\operatorname{Pic}(X)$ is called a
$(-2)$-class or a root, if
\[
\alpha^2=-2,\qquad \alpha\cdot K_X=0.
\]
The roots form a root system whose type is listed in
Table~\ref{tab:del-pezzo-root-systems}.
The lattice generated by these roots, equipped with the intersection
pairing, depends only on $d$ up to isometry.
We call it the
\defterm{del Pezzo lattice} of degree $d$.
For $1\le d\le6$, this lattice coincides with
$K_X^\perp\subset\operatorname{Pic}(X)$.

\begin{table}[htbp]
	\caption{Root systems associated with weak del Pezzo surfaces.}
	\label{tab:del-pezzo-root-systems}
	\centering
	\begin{tabular}{lcccccc}
		\toprule
		Degree $d$
		& $1$ & $2$ & $3$ & $4$ & $5$ & $6$  \\
		\midrule
		Root system
		& $E_8$ & $E_7$ & $E_6$ & $D_5$ & $A_4$
		& $A_2\oplus A_1$\\
		\bottomrule
	\end{tabular}
\end{table}

Let $C_1,\ldots,C_r$ be the irreducible $\rho$-exceptional curves.
We say a root $\alpha$ is effective if and only if
\[
\alpha=\sum_{j=1}^{r}n_j[C_j],
\qquad n_j\in\mathbb Z_{\geq 0}.
\]
An effective root is represented by an irreducible $(-2)$-curve
if and only if it equals $[C_j]$ for some $j$.
Let $E$ be a del Pezzo lattice.
The Weyl group $W(E)$ acts on $E$.
The following result classifies root sublattices of $E$
which are isometric, but do not belong to the same $W(E)$-orbit.

The following proposition can be either read off from the classification
results in \cite[Section~4]{DynkinMinchenko2010}, or derived from the general theory
developed there.

\begin{prop}[{\cite{DynkinMinchenko2010}}]
	\label{prop:two-embeddings}
	Let $E$ be a del Pezzo lattice of degree $1\leq d\leq 6$.
	A root lattice $R$ admits two isometric embeddings into $E$
	whose images are not $W(E)$-conjugate if and only if
	\[
	R\cong\bigoplus_{j=1}^{s}A_{2m_j-1},\qquad m_j\ge1,
	\]
	such that $q=\sum_j m_j$ satisfies one of the following conditions:
	\begin{enumerate}
		\item $q=4$ when $d=1$;
		\item either $q=3$, or $q=4$ and $\min_j m_j=1$, when $d=2$;
		\item $q=2$ when $d=4$;
		\item $R=A_1$ when $d=6$;
	\end{enumerate}
	In particular, no such $R$ exists for $d=3$ or $5$.
\end{prop}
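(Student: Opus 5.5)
The plan is to reduce the statement to the Borel–de Siebenthal / Dynkin style classification of root subsystems of $E_8,E_7,E_6,D_5,A_4,A_2\oplus A_1$ up to Weyl group conjugacy. Isometric embeddings $R\hookrightarrow E$ with root-lattice image correspond, after passing to the root system $\Phi(R)\subset\Phi(E)$, to root subsystems. I would first check that two such embeddings are $W(E)$-conjugate precisely when the images are conjugate as subsystems. Here one uses that $\operatorname{Aut}(R)$ is generated by $W(R)$ and diagram automorphisms, and verifies in each case that the diagram automorphisms are realized by the normalizer of the image in $W(E)$. This last check is exactly what Dynkin–Minchenko carry out, distinguishing "linear equivalence" from conjugacy.

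Next I would invoke the known tables, following Dynkin and Oshima, of $W(E)$-orbits of root subsystems for each exceptional type. The relevant facts are these:
\begin{enumerate}
\item In $E_8$ the only isomorphism types with two orbits are $A_7$, $A_5+A_1$, $A_3+2A_1$ and $4A_1$ (and $2A_3$). These are exactly the sums $\bigoplus A_{2m_j-1}$ with $\sum m_j=4$.
\item In $E_7$ the non-unique types are $A_5$, $A_3+A_1$, $3A_1$ (so $q=3$) and $A_5+A_1$, $A_3+2A_1$, $4A_1$ (so $q=4$ with some $m_j=1$). Note that $2A_3$ and $A_7$ embed uniquely.
\item In $D_5$ the non-unique types are $A_3$ and $2A_1$ (so $q=2$).
\item In $A_2\oplus A_1$ the only such type is $A_1$, which lies either in the $A_2$ factor or as the $A_1$ factor.
\item In $E_6$ and $A_4$ every subsystem is unique up to conjugacy.
\end{enumerate}

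Conceptually, the two-orbit phenomenon arises because a sum of $A_{2m-1}$'s with $\sum m_j=q$ sits inside a $D_{2q}$ or $A_{2q-1}$ in two inequivalent ways, detected by whether the half-sum of certain roots (a $2$-torsion element of the discriminant group) lies in $E$. This is the hook to the "type $\pm$" dichotomy used later.

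The main obstacle is making the "only if" direction rigorous without just quoting tables. If a self-contained argument were wanted, I would characterize the second orbit via the invariant $\big(\tfrac12\sum \text{(alternate roots of each }A_{2m_j-1})\big)\in E\otimes\mathbb Q$ and whether it lies in $E$ (or in $E+\tfrac12 K$-type cosets). I would then show that every other isomorphism type has a unique orbit by induction on rank using maximal-subsystem chains. Given the paper's framing, however, I would simply read the result off from \cite[Section~4]{DynkinMinchenko2010}, checking each degree in turn, with the cases $d=4$ and $d=6$ done by hand since $D_5$ and $A_2\oplus A_1$ are classical.
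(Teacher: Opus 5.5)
Your plan is correct and matches the paper, which gives no independent argument but reads the statement off the classification of $W(E)$-orbits of root subsystems in \cite[Section~4]{DynkinMinchenko2010}. Your degree-by-degree lists agree with Tables~\ref{tab:root-embeddings-degree1}--\ref{tab:root-embeddings-degree6}.

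You should drop your preliminary step relating conjugacy of embeddings to conjugacy of images. The statement only concerns $W(E)$-orbits of the images, and the equivalence you propose to verify is false in general. For $d=3$ or $5$, the embeddings $\mathrm{id}$ and $-\mathrm{id}$ of $E$ into itself have the same image, but they are not $W(E)$-conjugate, because $-1\notin W(E_6)$ and $-1\notin W(A_4)$. Checking that equivalence case by case would therefore produce spurious extra orbits precisely in the degrees where the proposition says there are none.
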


Define the following roots under the standard basis:
\[
	\alpha_i=e_i-e_{i+1}
	\quad(1\le i\le 8-d),\quad
	u=h-e_1-e_2-e_3,
	\]
	\[
	v=2h-\sum_{i=1}^{6}e_i\quad(d=1),
	\qquad
	w=2h-\sum_{i=2}^{7}e_i\quad(d=2).
\]
For each type $R$ satisfying the conditions of Proposition~\ref{prop:two-embeddings},
representatives of the two Weyl orbits are listed in Tables~\ref{tab:root-embeddings-degree1}, \ref{tab:root-embeddings-degree2},
\ref{tab:root-embeddings-degree4},
and \ref{tab:root-embeddings-degree6}.

\begin{table}[htbp]
	\caption{Representatives of the two Weyl orbits in degree $1$.}
	\label{tab:root-embeddings-degree1}
	\centering
	\small
	\begin{tabular}{lll}
		\toprule
		Type $R$ & $\Delta_+$ & $\Delta_-$ \\
		\midrule
		$A_7$
		& $\{\alpha_1,\ldots,\alpha_6,v\}$
		& $\{\alpha_1,\ldots,\alpha_7\}$ \\[2pt]
		$A_5\oplus A_1$
		& $\{\alpha_1,\ldots,\alpha_5,v\}$
		& $\{\alpha_1,\ldots,\alpha_5,\alpha_7\}$ \\[2pt]
		$2A_3$
		& $\{\alpha_1,\alpha_2,\alpha_3,\alpha_5,\alpha_6,v\}$
		& $\{\alpha_1,\alpha_2,\alpha_3,\alpha_5,\alpha_6,\alpha_7\}$ \\[2pt]
		$A_3\oplus2A_1$
		& $\{\alpha_1,\alpha_2,\alpha_3,\alpha_5,v\}$
		& $\{\alpha_1,\alpha_2,\alpha_3,\alpha_5,\alpha_7\}$ \\[2pt]
		$4A_1$
		& $\{\alpha_1,\alpha_3,\alpha_5,v\}$
		& $\{\alpha_1,\alpha_3,\alpha_5,\alpha_7\}$ \\
		\bottomrule
	\end{tabular}
\end{table}

\begin{table}[htbp]
	\caption{Representatives of the two Weyl orbits in degree $2$.}
	\label{tab:root-embeddings-degree2}
	\centering
	\small
	\begin{tabular}{lll}
		\toprule
		Type $R$ & $\Delta_+$ & $\Delta_-$ \\
		\midrule
		$A_5$
		& $\{u,\alpha_3,\alpha_4,\alpha_5,\alpha_6\}$
		& $\{\alpha_2,\alpha_3,\alpha_4,\alpha_5,\alpha_6\}$ \\[2pt]
		$A_3\oplus A_1$
		& $\{u,\alpha_3,\alpha_4,\alpha_6\}$
		& $\{\alpha_2,\alpha_3,\alpha_4,\alpha_6\}$ \\[2pt]
		$3A_1$
		& $\{u,\alpha_4,\alpha_6\}$
		& $\{\alpha_2,\alpha_4,\alpha_6\}$ \\[2pt]
		$A_5\oplus A_1$
		& $\{\alpha_1,u,\alpha_3,\alpha_4,\alpha_5,\alpha_6\}$
		& $\{\alpha_2,\alpha_3,\alpha_4,\alpha_5,\alpha_6,w\}$ \\[2pt]
		$A_3\oplus2A_1$
		& $\{\alpha_1,u,\alpha_3,\alpha_4,\alpha_6\}$
		& $\{\alpha_2,\alpha_3,\alpha_4,\alpha_6,w\}$ \\[2pt]
		$4A_1$
		& $\{\alpha_1,u,\alpha_4,\alpha_6\}$
		& $\{\alpha_2,\alpha_4,\alpha_6,w\}$ \\
		\bottomrule
	\end{tabular}
\end{table}

\begin{table}[htbp]
	\caption{Representatives of the two Weyl orbits in degree $4$.}
	\label{tab:root-embeddings-degree4}
	\centering
	\small
	\begin{tabular}{lll}
		\toprule
		Type $R$ & $\Delta_+$ & $\Delta_-$ \\
		\midrule
		$A_3$
		& $\{\alpha_1,\alpha_2,\alpha_3\}$
		& $\{u,\alpha_3,\alpha_4\}$ \\[2pt]
		$2A_1$
		& $\{\alpha_1,\alpha_3\}$
		& $\{u,\alpha_4\}$ \\
		\bottomrule
	\end{tabular}
\end{table}
\begin{table}[htbp]
	\caption{Representatives of the two Weyl orbits in degree $6$.}
	\label{tab:root-embeddings-degree6}
	\centering
	\begin{tabular}{lll}
		\toprule
		Type $R$ & $\Delta_+$ & $\Delta_-$ \\
		\midrule
		$A_1$ & $\{\alpha_1\}$ & $\{u\}$ \\
		\bottomrule
	\end{tabular}
\end{table}

\begin{prop}\label{prop:distinguished-curve}
	Assume that $d\in\{1,2,4\}$ and $S$ is a Du Val del Pezzo surface defined over a field $k$,
	and that the induced lattice satisfies the equivalent conditions of
	Proposition~\ref{prop:two-embeddings}.
	Then $S_{\overline{k}}$ corresponds to the $+$ model if and only if
	$S_{\overline{k}}$ contains a $(-1)$-curve $C$ whose strict transform meets
	the central component
	of every exceptional chain above every singularity, except that one $A_1$ singularity is omitted
	when $(d,q)=(2,4)$.
	Moreover, such a curve $C$ is unique if it exists.
\end{prop}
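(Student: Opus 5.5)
This is a statement about $S_{\overline{k}}$ only, so we may assume $k=\overline{k}$ and work on the minimal resolution $X$. The argument is lattice-theoretic.

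The $(-1)$-curves on $X$ (strict transforms of $(-1)$-curves on $S$) are exactly the classes $\ell$ with $\ell^2=\ell\cdot K_X=-1$ that satisfy $\ell\cdot C_j\ge 0$ for every irreducible $(-2)$-curve $C_j$. Effectivity is automatic by Riemann--Roch, and irreducibility follows from the nonnegativity conditions.

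For the simple roots $\beta_1,\dots,\beta_{2m-1}$ of an $A_{2m-1}$ chain, the central component is $\beta_m$. A $(-1)$-class meeting the central component of every chain, as in the statement, must then satisfy
\[
\ell\cdot\beta_m=1\ \text{for each central component},\qquad \ell\cdot\beta=0\ \text{for all other simple roots}.
\]
This is forced by the equality case in the estimate below. Write $\varpi$ for the fundamental coweight attached to the central node of each chain.

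\textbf{Step 1: the curve exists in the $+$ model.} Using the representatives $\Delta_+$ from Tables~\ref{tab:root-embeddings-degree1}, \ref{tab:root-embeddings-degree2} and \ref{tab:root-embeddings-degree4}, I would write down an explicit $(-1)$-class and check the conditions above. In degree $1$ the class $e_7$ has intersection $1$ with $\alpha_6$ and $-1$ with $\alpha_7$, so it is not the right choice. Instead I would search among $e_i$, $h-e_i-e_j$, $2h-\sum e$ and similar classes.

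For example, for $4A_1=\{\alpha_1,\alpha_3,\alpha_5,v\}$ the class $\ell=h-e_1-e_3-e_5$ meets each of these roots once, and $\ell\cdot v=2-3=-1$. That fails, so I would adjust, e.g. to a cubic class $3h-2e_i-\sum e_j$. For the larger types $A_7,\dots$ I would check compatibility by restricting a $4A_1$ sub-configuration of centrals.

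Once I have one class for each of the finitely many cases, I verify the finite set of intersection numbers. Any $\Delta$ with a simple-root basis consisting of effective irreducible classes is realized by some weak del Pezzo surface, so this exhibits $C$.

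\textbf{Step 2: no such curve in the $-$ model, and uniqueness.} Suppose $\ell$ is such a class. Project $\ell$ to $R\otimes\mathbb{Q}$: its component is $\sum_{\text{chains}}\varpi_{m_j}^\vee$. For $A_{2m-1}$ the central coweight has norm $-m/2$ (in the negative definite convention), so the projection has norm $-q/2$.

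Now $\ell=-\tfrac{1}{d}K_X\cdot(\ldots)$ decomposes as
\[
\ell = \tfrac{1}{d}K_X + \lambda + \mu,
\]
where $\lambda$ is the projection to $R$ and $\mu\in (R+K_X)^\perp$. This gives
\[
-1=\ell^2=\tfrac1d-\tfrac q2+\mu^2 .
\]
For $(d,q)=(1,4)$ this forces $\mu=0$. For $(2,3)$ and $(4,2)$ it also forces $\mu=0$. In the case $(2,4)$ the omitted $A_1$ accounts for the discrepancy in the same way.

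Hence $\ell$ is uniquely determined as $\tfrac1d K_X+\lambda$, which gives uniqueness. Moreover, existence is equivalent to $\tfrac1dK_X+\lambda$ being integral in $\operatorname{Pic}(X)$, i.e.\ to $\lambda\in \tfrac1d K_X+\operatorname{Pic}(X)$ modulo $R$. This is a condition on the embedding's glue (the image of $\lambda$ in the discriminant group of $R$ relative to $E$). By Proposition~\ref{prop:two-embeddings}, this is precisely the invariant that separates the two Weyl orbits, and Step 1 shows it holds for $+$. For the $-$ representatives, a direct check shows $\tfrac1dK_X+\lambda\notin\operatorname{Pic}(X)$; for instance, in degree $1$ type $4A_1$, the vector $K_X+\tfrac12(\alpha_1+\alpha_3+\alpha_5+\alpha_7)$ has half-integral coefficients.

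\textbf{Main obstacle.} The hardest part is making the norm computation precise: fixing sign conventions, confirming that $\mu^2\le0$ with equality only if $\mu=0$ (by negative definiteness of $K_X^\perp$ via Hodge index), and handling the exceptional $(2,4)$ case. There the omitted $A_1$ is not determined a priori, so uniqueness needs a short separate argument: two different omissions would give two classes whose intersection number violates $\ell\cdot\ell'\ge0$.
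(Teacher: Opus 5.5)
Your treatment of $d=1$ and $d=2$ is essentially the paper's argument, with one sign to fix. Since $\ell\cdot K_X=-1$ and $K_X^2=d$, the $K_X$-component of $\ell$ is $-\frac1dK_X$, not $+\frac1dK_X$. With that correction, your $-\frac1dK_X+\lambda$ is exactly the paper's class $Z$, and your $\mu^2\le 0$ is the paper's $(F-Z)^2\ge 0$. Existence then reduces to integrality of $Z$.

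The genuine gap is $d=4$. Your identity gives $\mu^2=-1-\frac1d+\frac q2$. This vanishes for $(d,q)=(1,4)$ and $(2,3)$, but for $(d,q)=(4,2)$ it equals $-\frac14$, so nothing forces $\mu=0$. In fact the conclusion fails there. For $\Delta_+=\{\alpha_1,\alpha_3\}$ the distinguished curve is $h-e_1-e_3$, and
\[
h-e_1-e_3=-\tfrac14K_X-\tfrac12(\alpha_1+\alpha_3)+\tfrac14(h-e_1-e_2-e_3-e_4+e_5),
\]
so $\mu\neq 0$. Moreover $-\frac14K_X+\lambda$ is not even integral. Your criterion ``existence $\iff -\frac1dK_X+\lambda\in\operatorname{Pic}(X)$'' would therefore wrongly predict that neither degree-$4$ model has a distinguished curve.

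So for $d=4$ you have proved neither uniqueness nor the $\pm$ dichotomy, and a separate argument is needed. This is why the paper treats $d=4$ apart: it inspects the sixteen $(-1)$-classes $e_i$, $h-e_i-e_j$, $2h-\sum_{i=1}^5e_i$ on the representatives. For $\{\alpha_1,\alpha_3\}$, only $h-e_1-e_3$ meets both roots. For $\{u,\alpha_4\}$, every class meeting $u$ has intersection $0$ with $\alpha_4$. For $A_3$, one must also use nonnegativity against the non-central roots to discard $e_3$, $h-e_2-e_4$, $h-e_2-e_5$ and the analogous classes in the $-$ model.

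Two smaller points. Step~1 is never actually carried out, since both trial classes fail, but it is unnecessary. Once the sign is fixed, the class $-\frac1dK_X+\lambda$ is itself the curve whenever it is integral; for $4A_1$ in degree $1$ it is $2h-e_1-e_3-e_5-e_7-e_8$. Also, your claim that integrality of $-\frac1dK_X+\lambda$ is ``precisely the invariant separating the two Weyl orbits'' does not follow from Proposition~\ref{prop:two-embeddings}. It has to be verified on each representative $\Delta_\pm$, as the paper does with an explicit parity computation modulo $2\operatorname{Pic}(X)$. This includes the $(2,4)$ case, where for the $-$ model one must rule out every choice of omitted $A_1$.
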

\begin{proof}
	Let $\rho:X\to S_{\bar k}$ be the minimal resolution.
	By Proposition~\ref{prop:two-embeddings},
	the $\rho$-exceptional locus is a disjoint
	union of chains of $(-2)$-curves.
	Write $K=K_X$ and $L=\operatorname{Pic}(X)$.
	
	We first assume that $d=1$, or that $(d,q)=(2,3)$.
	Write $D_{j,1},\ldots,D_{j,2m_j-1}$
	for the components of the $j$-th exceptional chain,
	ordered along the chain.
	For any subset $I$ of the set of
	exceptional chains, put
	\begin{equation}\label{eq:Z}
		Z=-\frac Kd-\frac12\sum_{j\in I}
		\sum_{i=1}^{2m_j-1}\min\{i,2m_j-i\}D_{j,i}.
	\end{equation}
	A direct computation gives
	\[
	K\cdot Z=-1,
	\qquad
	Z^2=\frac1d-\frac12\sum_{j\in I}m_j.
	\]
	Take $I$ to be the set of all exceptional chains.
	Then $\sum_{j\in I}m_j=2+2/d$, so
	\begin{equation}\label{eq:Z2}
		Z^2=K\cdot Z=-1.
	\end{equation}
	Moreover, $Z$ intersects each central component $D_{j,m_j}$
	for $j\in I$
	with multiplicity one and every other
	component with multiplicity zero.

	We now show that
	any $(-1)$-curve meeting all
	central components of $I$ must have numerical class $Z$.
	Let $F$ be such a $(-1)$-curve. Then
	\[
	F\cdot Z\le
	\frac1d-\frac12\sum_{j\in I}m_j=-1.
	\]
	Hence $F-Z\in K^\perp$ and
	\[
	(F-Z)^2=-2-2F\cdot Z\ge0.
	\]
	Negative definiteness of $K^\perp$ forces $[F]=Z$.
	
	We now show that $Z$ represents a $(-1)$-curve if and only
	if $X$ is defined by the $+$ model.
	In view of \eqref{eq:Z2}, and since $Z$ has non-negative intersection with every $(-2)$-curve
	on $X$, it suffices to prove that $Z\in L$ by
	\cite[Corollaire au Th\'eor\`eme~III.2]{Demazure}
	(see also \cite[Lemma~3.8]{DerenthalJoyceTeitler}).
	
	Set $B_I=\sum_{j\in I}\sum_{i\ {\rm odd}}D_{j,i}$.
	Since $\min\{i,2m_j-i\}\equiv i\pmod2$, we have
	\[
	Z\in L\iff \frac2dK+B_I\in2L.
	\]
	One checks directly that
	\[
	\frac2dK+B_I\equiv
	\begin{cases}
		0,&\text{in the $+$ model},\\
		e_1+\cdots+e_8,
		&\text{in the $-$ model with $d=1$},\\
		h+e_1,
		&\text{in the $-$ model with $d=2$}
	\end{cases}
	\pmod{2L},
	\]
	which shows that $Z$ is a $(-1)$-curve if and only
	if $X$ is defined by the $+$ model.

	Now suppose that $(d,q)=(2,4)$, and define $Z$ by
	\eqref{eq:Z} after omitting an $A_1$ component $D_0$.
	The same argument applies, with
	\[
	Z\in L\iff K+B-D_0\in2L,
	\qquad\text{where }
	B=\sum_j\sum_{i\text{ odd}}D_{j,i}.
	\]
	In the $+$ model, a direct computation shows that $K+B\equiv\alpha_1\pmod{2L}$,
	so the only $A_1$ component giving an integral class is $D_0=\alpha_1$.
	In the $-$ model, every exceptional component has
	even $h$-coefficient, whereas $K$ has odd $h$-coefficient.
	Thus no choice of $D_0$ gives an integral class.
	
	Finally for $d=4$, there are exactly $16$ $(-1)$-classes,
	and \cite[Appendix A]{CheltsovProkhorov2021}
	lists the dual graphs of negative curves involved.
	One checks directly that
	$h-e_1-e_2$ for $A_3$
	and $h-e_1-e_3$ for $2A_1$
	represent the unique $(-1)$-curves in the respective cases on the $+$ model
	intersecting the central components,
	and that no $-$ model admits such a curve.
\end{proof}
\begin{rema}
	See \cite[Theorem~4.6 and Remark~4.1]{DynkinMinchenko2010}
	for an explanation of the two possible values of $q$
	when $d=2$.
\end{rema}

It is natural to introduce the following notion to distinguish
the two kinds of surfaces with the same singularity type.
\begin{defi}\label{defi:isk}
	Let $S$ be a Du Val del Pezzo surface of degree
	$d\in\{1,2,4\}$ over a field $k$, and let
	$\rho:X\to S_{\bar k}$ be its minimal resolution.
	Let $R(S)\subset K_X^\perp$ be the sublattice generated
	by the classes of the irreducible $\rho$-exceptional curves.
	
	For $\varepsilon\in\{-,+\}$, we call $S$ a
	\defterm{generalized Iskovskikh surface of type $\varepsilon$}
	if, under a marking of $\Pic(X)$ as in \eqref{eq:basis},
	the sublattice $R(S)$ is $W(E)$-conjugate to
	$\mathbb Z\Delta_\varepsilon$ for one of the configurations
	of degree $d$ listed in
	Tables~\ref{tab:root-embeddings-degree1} to
	\ref{tab:root-embeddings-degree4}.
\end{defi}

In other words, among Du Val del Pezzo surfaces of degree
at most $4$, generalized Iskovskikh surfaces are precisely
those whose degree and singularity type do not uniquely
determine the configuration of their geometric $(-1)$-curves.
Such a surface $S$ is of type $+$ if and only if
$S_{\bar k}$ contains
a unique $(-1)$-curve with the incidence properties described
in Proposition~\ref{prop:distinguished-curve}.
We call this curve the \defterm{distinguished curve}.
Generalized Iskovskikh surfaces of type $-$ with degree $4$
and two Galois-conjugate $A_1$ singularities are precisely
the Iskovskikh surfaces introduced in
\cite[p.~74]{CorayTsfasman1988}.

\begin{rema}
	Singular del Pezzo surfaces of degree $6$
	with one $A_1$ singularity
	also give rise to two geometric types, as in
	Table~\ref{tab:root-embeddings-degree6}.
	But the characterization by a unique distinguished curve
	does not extend to this case.
\end{rema}

While working on \cite{gao2024quasi}, we observed that, for generalized Iskovskikh surfaces,
one of the models tends to admit a nontrivial quasi-\'etale cover.
The following lemmas and corollary explain this phenomenon.
\begin{lemm}\label{lemm:A-chain-torsion}
	Let $R=\bigoplus_{j=1}^s A_{2m_j-1}$, with simple roots
	$\alpha_{j,1},\ldots,\alpha_{j,2m_j-1}$ ordered along each chain.
	Then $(R^\vee/R)[2]\cong(\mathbb Z/2\mathbb Z)^s$,
	with basis given by the classes of
	\begin{equation}\label{eq:basis-torsion}
		\frac12\sum_{i\text{ odd}}\alpha_{j,i},
		\qquad 1\le j\le s.
	\end{equation}
\end{lemm}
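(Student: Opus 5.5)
The discriminant group decomposes as an orthogonal direct sum over the chains, so I will reduce to a single chain. Since $R=\bigoplus_j A_{2m_j-1}$ is an orthogonal sum, $R^\vee=\bigoplus_j A_{2m_j-1}^\vee$. Hence $R^\vee/R\cong\bigoplus_j A_{2m_j-1}^\vee/A_{2m_j-1}$, and taking $2$-torsion commutes with finite direct sums. It therefore suffices to show, for a single chain $A_{n}$ with $n=2m-1$, that $(A_n^\vee/A_n)[2]\cong\mathbb Z/2\mathbb Z$ and that it is generated by the class of $\tfrac12\sum_{i\text{ odd}}\alpha_i$.

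For one chain, I will use the standard fact that $A_n^\vee/A_n\cong\mathbb Z/(n+1)\mathbb Z$, generated by the fundamental weight $\omega_1$. One can check this directly: the Cartan matrix has determinant $n+1$, and the cosets of $\omega_k$ satisfy $\omega_k\equiv k\,\omega_1\pmod{A_n}$. Since $n+1=2m$ is even, the $2$-torsion subgroup of $\mathbb Z/2m\mathbb Z$ is exactly $\{0,m\}\cong\mathbb Z/2\mathbb Z$. Its nonzero element is the class of $m\,\omega_1$, which equals the class of $\omega_m$, the fundamental weight at the central node.

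It remains to identify this element with $\beta=\tfrac12\sum_{i\text{ odd}}\alpha_i$. I will verify three things directly.

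\begin{enumerate}
\item $\beta\in R^\vee$. Pairing $\beta$ with $\alpha_k$ gives $\tfrac12(-2)=-1$ if $k$ is odd. If $k$ is even, the neighbours $k\pm1$ are odd and both lie in $[1,2m-1]$, so the pairing is $\tfrac12(1+1)=1$. In every case the pairing is an integer.
\item $2\beta=\sum_{i\text{ odd}}\alpha_i\in R$, so the class of $\beta$ is $2$-torsion.
\item $\beta\notin R$, because its coefficients are half-integers.
\end{enumerate}

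So the class of $\beta$ is the unique nonzero element of $(A_n^\vee/A_n)[2]$. Alternatively, it agrees with the coefficient vector $\min\{i,2m-i\}/2$ appearing in \eqref{eq:Z}, up to an element of $R$, using the same parity observation made in the proof of Proposition~\ref{prop:distinguished-curve}. Summing over the chains then shows that the classes \eqref{eq:basis-torsion} form an $\mathbb F_2$-basis of $(\mathbb Z/2\mathbb Z)^s$.

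The only step needing care is the cyclicity claim $A_n^\vee/A_n\cong\mathbb Z/(n+1)\mathbb Z$. To avoid relying on it, I can argue directly. The group has order $\det=2m$, so it is enough to show it is cyclic. An element $\sum c_i\alpha_i\in R^\vee$ with rational coefficients must satisfy $2c_k-c_{k-1}-c_{k+1}\in\mathbb Z$. These recurrence constraints determine all $c_i$ modulo $\mathbb Z$ from $c_1$ alone, so the quotient embeds into $\mathbb Q/\mathbb Z$ and is therefore cyclic.
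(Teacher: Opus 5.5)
Your proof is correct, but it is organized differently from the paper's.

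The paper does not split off the chains or compute the full discriminant group. It takes an arbitrary $y\in R^\vee$ with $2y\in R$ and writes $2y=\sum a_{j,i}\alpha_{j,i}$ with $a_{j,i}\in\mathbb Z$. It then observes that integrality of $y\cdot\alpha_{j,i}$ is exactly the mod-$2$ recurrence $a_{j,i-1}\equiv a_{j,i+1}\pmod 2$, with $a_{j,0}=a_{j,2m_j}=0$. This forces the even-indexed coefficients to be even and the odd-indexed ones to share a parity, which gives generation by the classes \eqref{eq:basis-torsion} for all chains at once.

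You instead reduce to a single chain and prove (or quote) that $A_{2m-1}^\vee/A_{2m-1}$ is cyclic of order $2m$. You then check by hand that $\beta$ is a nonzero $2$-torsion class, hence the unique one. Your fallback argument is the recurrence $c_k\equiv kc_1\pmod{\mathbb Z}$, which embeds the quotient into $\mathbb Q/\mathbb Z$. This is the same integrality computation as the paper's, run over $\mathbb Q/\mathbb Z$ instead of modulo $2$. Cyclicity together with the existence of $\beta$ already suffices, so the order count is not even needed.

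The trade-offs are as follows:
\begin{itemize}
\item The paper's route is more economical, since it only ever touches the $2$-torsion.
\item Your route computes more than is needed, but it returns extra information. It shows that $\beta$ represents the class of the central fundamental weight $\omega_m$ and agrees modulo $R$ with the class used in \eqref{eq:Z}.
\item Your orthogonal splitting, combined with $\beta\notin R$, makes the $\mathbb F_2$-independence of the $s$ classes explicit. The paper only asserts that independence.
\end{itemize}
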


\begin{proof}
	Let $y\in R^\vee$ satisfy $2y\in R$, and write
	\[
	2y=\sum_{j,i}a_{j,i}\alpha_{j,i},
	\qquad a_{j,i}\in\mathbb Z.
	\]
	Set $a_{j,0}=a_{j,2m_j}=0$.
	The integrality of $y\cdot\alpha_{j,i}$ gives
	\[
	a_{j,i-1}\equiv a_{j,i+1}\pmod2.
	\]
	Thus, on each chain, the even-indexed coefficients are even
	and the odd-indexed coefficients have the same parity.
	Hence the classes in \eqref{eq:basis-torsion} generate $(R^\vee/R)[2]$.
	They form a basis since their classes are linearly independent over $\mathbb F_2$.
\end{proof}
\begin{lemm}\label{lemm:torsion}
	Assume the equivalent conditions of
	Proposition~\ref{prop:two-embeddings} hold.
	Then the following are equivalent:
	\begin{enumerate}
		\item At least one of the two embeddings satisfies
		$(E/R)[2]\neq0$.
		\item The Dynkin graph of $R$ contains four pairwise
		nonadjacent vertices.
	\end{enumerate}
	Moreover, when these conditions hold, exactly one of the two
	embeddings satisfies $(E/R)[2]=\bZ/2\bZ$.
	This occurs for the $+$ models in degree $1$
	and the $-$ models in degree $2$.
\end{lemm}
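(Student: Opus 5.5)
The plan is to reduce $(E/R)[2]$ to the explicit $2$-torsion classes of Lemma~\ref{lemm:A-chain-torsion}, and then decide case by case which of them lie in $E$.

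\textbf{Reduction.} Since $R\subset E\subset E^\vee\subset R^\vee$ (the last inclusion because $R\otimes\mathbb Q=E\otimes\mathbb Q$ has full rank), we have
\[
(E/R)[2]=\{y\in E:2y\in R\}/R\subset (R^\vee/R)[2].
\]
By Lemma~\ref{lemm:A-chain-torsion}, every class in $(R^\vee/R)[2]$ is represented by $y_I=\tfrac12 B_I$ for a subset $I$ of the chains, where $B_I=\sum_{j\in I}\sum_{i\text{ odd}}\alpha_{j,i}$. So $(E/R)[2]$ is the set of $I$ with $y_I\in E$.

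\textbf{Norm constraint.} The odd-indexed roots of an $A_{2m_j-1}$ chain are $m_j$ pairwise orthogonal roots. Hence
\[
y_I^2=-\tfrac12\sum_{j\in I}m_j .
\]
The lattices $E_8$, $E_7$ and $D_5$ are even, since they are generated by roots of norm $-2$. So $y_I\in E$ forces $\sum_{j\in I}m_j\equiv0\pmod 4$.

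Next, the maximal number of pairwise nonadjacent vertices in $A_{2m_j-1}$ is $m_j$. Therefore condition (2) holds exactly when $q\ge 4$, that is, when $q=4$ in the cases of Proposition~\ref{prop:two-embeddings}. If $q<4$, which covers $d=4$, $d=2$ with $q=3$, and $d=6$ where $q=1$, the only admissible subset is $I=\varnothing$. Then $(E/R)[2]=0$ for both embeddings, which proves (1)$\Rightarrow$(2). If $q=4$, the only candidate is $I=\{\text{all chains}\}$. This shows $(E/R)[2]$ is at most $\mathbb Z/2\mathbb Z$, and it remains to test integrality of $\tfrac12 B$ on the representatives in Tables~\ref{tab:root-embeddings-degree1} and~\ref{tab:root-embeddings-degree2}.

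\textbf{Degree $1$.} I will reuse the computation in the proof of Proposition~\ref{prop:distinguished-curve}. In the $+$ model, $2K+B\in 2L$, so $\tfrac12 B\equiv -K\pmod L$ lies in $L$, and it is orthogonal to $K$, so it lies in $E=K^\perp$. In the $-$ model, $2K+B\equiv e_1+\dots+e_8\not\equiv0$ modulo $2L$, so $\tfrac12B\notin L$.

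\textbf{Degree $2$, $q=4$.} I compute directly from the tables. For the $-$ model,
\[
\alpha_2+\alpha_4+\alpha_6+w=2h-2e_3-2e_5-2e_7,
\]
so $\tfrac12 B=h-e_3-e_5-e_7$, which is integral and orthogonal to $K$. For the $+$ model,
\[
\alpha_1+u+\alpha_4+\alpha_6=h-2e_2-e_3+e_4-e_5+e_6-e_7\notin 2L .
\]
This is consistent with $K+B\equiv\alpha_1$ in that proof.

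Hence, when $q=4$, exactly one embedding has $(E/R)[2]=\mathbb Z/2\mathbb Z$: the $+$ model in degree $1$ and the $-$ model in degree $2$. This gives (2)$\Rightarrow$(1) together with the final assertion.

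The main obstacle is only bookkeeping: justifying $E\subset R^\vee$, and confirming that each representative in the tables is the intended one. The norm argument removes all other subsets $I$ uniformly.
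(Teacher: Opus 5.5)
Your argument is essentially the paper's proof. You reduce to the classes $\tfrac12 B_I$ via Lemma~\ref{lemm:A-chain-torsion}. You then use evenness of $E$ to force $4\mid\sum_{j\in I}m_j$, hence $q=4$ and $I$ equal to all chains. Finally you test integrality of $\tfrac12 B$ on the table representatives. In degree $1$ you borrow the congruence from the proof of Proposition~\ref{prop:distinguished-curve} instead of the paper's odd-coefficient check, which is only a cosmetic difference.

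One correction to your reduction step: the reason you give is false. $R$ never has full rank in $E$ in these cases; for example, $4A_1\subset E_8$ has rank $4$ and $A_7\subset E_8$ has rank $7$. So $R\otimes\mathbb Q\neq E\otimes\mathbb Q$ and $E\not\subset R^\vee$, which means the "obstacle" you name cannot be overcome as stated. What you actually need, and what the paper uses, is weaker. If $y\in E$ and $2y\in R$, then $y\in R\otimes\mathbb Q$. Since $E$ is integral and $R\subset E$, $y$ pairs integrally with $R$, so $y\in R^\vee$. With that fix, your displayed inclusion $\{y\in E:2y\in R\}/R\subset(R^\vee/R)[2]$ and the rest of the argument go through unchanged.
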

\begin{proof}
	By Proposition~\ref{prop:two-embeddings}, we have
	\[
	R=\bigoplus_{j=1}^s A_{2m_j-1},
	\qquad q=\sum_jm_j\le4.
	\]
	The maximal number of pairwise nonadjacent vertices
	in the Dynkin graph of $R$ is $q$.
	Thus condition~(2) is equivalent to $q=4$.
	
	Write $D_{j,1},\ldots,D_{j,2m_j-1}$ for the
	components of the $j$-th chain, and put
	\(
	B_j=\sum_{i\ {\rm odd}}D_{j,i}.
	\)
	Let $x\in E$ represent a nonzero element of $(E/R)[2]$.
	Since $2x\in R$ and $E$ is integral, we have
	$x\in R^\vee$.
	By Lemma \ref{lemm:A-chain-torsion}, we have
	\[
	x\equiv\beta:=\frac12\sum_{j\in J}B_j\pmod R
	\]
	for some nonempty set $J$.
	Moreover, since $\beta$ belongs to the even lattice $E$
	and the roots occurring in $\beta$ are pairwise orthogonal,
	we have
	\[
	\beta^2=-\frac12\sum_{j\in J}m_j\in2\mathbb Z.
	\]
	Hence $4\mid\sum_{j\in J}m_j$.
	As $0<\sum_{j\in J}m_j\le q\le4$, we obtain $q=4$.
	This proves $(1)\Rightarrow(2)$.
	
	Conversely, suppose that $q=4$; then $d=1$ or $2$.
	Tables~\ref{tab:root-embeddings-degree1}
	and~\ref{tab:root-embeddings-degree2} give
	\[
	\begin{aligned}
		\frac{\alpha_1+\alpha_3+\alpha_5+v}{2}
		&=h-e_2-e_4-e_6
		&&\text{in the degree-$1$ $+$ model},\\
		\frac{\alpha_2+\alpha_4+\alpha_6+w}{2}
		&=h-e_3-e_5-e_7
		&&\text{in the degree-$2$ $-$ model}.
	\end{aligned}
	\]
	In either case, this defines a nonzero element of $(E/R)[2]$.

	The preceding argument also forces $J=\{1,\ldots,s\}$,
	so the only possible nonzero $2$-torsion class is
	represented by $\frac12\sum_jB_j$.
	This gives
	$(E/R)[2]\cong\bZ/2\bZ$ in the degree-$1$ $+$ model
	and the degree-$2$ $-$ model.
	In the degree-$1$ $-$ model and the degree-$2$ $+$ model
	with $q=4$, respectively, we have
	\[
	\sum_jB_j=\alpha_1+\alpha_3+\alpha_5+\alpha_7,
	\qquad
	\sum_jB_j=\alpha_1+u+\alpha_4+\alpha_6.
	\]
	Both have an odd coefficient in the standard basis,
	so they do not lie in $2E$.
	Hence the other embedding has trivial $2$-torsion.
\end{proof}

Throughout this paper, a \defterm{quasi-\'etale cover}
is a finite surjective morphism between normal geometrically integral varieties that is \'etale in codimension one.
\begin{coro}\label{coro:cover}
	Assume $\operatorname{char}k\neq2$.
	Every generalized Iskovskikh surface of type $+$ and degree $1$ over $k$
	admits a quasi-\'etale double cover defined over $k$.
	The same holds for a generalized Iskovskikh surface
	of type $-$ and degree $2$ with $q=4$,
	provided its minimal regular resolution over $k$
	has a $k$-rational point.
\end{coro}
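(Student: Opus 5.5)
Let $\rho:X\to S_{\bar k}$ be the minimal resolution, and write $U=S^{\rm sm}$ for the smooth locus. A quasi-\'etale double cover of $S$ is the same as an \'etale double cover of $U$, extended by normalization; its purity along the Du Val points is automatic. Over $\bar k$, the plan is to produce a $2$-torsion element of $\Pic(U_{\bar k})=\Pic(X)/\langle C_1,\dots,C_r\rangle$. Since $\Pic(X)$ is unimodular and $R(S)$ is primitive-free only up to torsion, we have
\[
\Pic(U_{\bar k})_{\rm tors}=\bigl(R^{\rm sat}/R\bigr),
\]
where $R^{\rm sat}$ is the saturation of $R=R(S)$ in $\Pic(X)$, equivalently in $E=K_X^\perp$. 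We also have $(E/R)[2]\subset E/R$. Lemma~\ref{lemm:torsion} tells us that, in the degree-$1$ $+$ case and the degree-$2$ $-$ case with $q=4$, this torsion contains a unique element $\tau$ of order $2$. It is represented by $\frac12\sum_j B_j$, which is explicitly $h-e_2-e_4-e_6$ or $h-e_3-e_5-e_7$ in the tables. Choose a divisor $L$ on $U_{\bar k}$ with $2L\sim0$ and $[L]=\tau$. Since $\operatorname{char}k\neq2$, the $\mu_2$-torsor $\operatorname{Spec}(\mathcal O\oplus L)$, with algebra structure given by a trivialization $L^{\otimes2}\cong\mathcal O$, is a connected \'etale double cover of $U_{\bar k}$. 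Its normalized closure over $S_{\bar k}$ is quasi-\'etale.

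\textbf{Descent.} Uniqueness of $\tau$ implies that it is $\mathrm{Gal}(\bar k/k)$-invariant, since the Galois action preserves $R$ and the lattice structure. I will use the exact sequence from the Hochschild--Serre spectral sequence for $\mathbb G_m$ on $U$, noting that $\bar k[U]^*=\bar k^*$ because $S$ is projective and normal:
\[
0\to\Pic(U)\to\Pic(U_{\bar k})^{G}\to \operatorname{Br}(k)\to\operatorname{Br}(U).
\]
It is more convenient to work with $\mu_2$-torsors directly via Kummer theory:
\[
0\to k^*/k^{*2}\to H^1(U,\mu_2)\to H^1(U_{\bar k},\mu_2)^G\to H^2(k,\mu_2)\to H^2(U,\mu_2).
\]
The class $\tau$ lies in $H^1(U_{\bar k},\mu_2)^G$, and we must show that its obstruction in $H^2(k,\mu_2)=\operatorname{Br}(k)[2]$ vanishes. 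The obstruction vanishes whenever $U$ (or $X$, or any open with trivial $\bar k[\,\cdot\,]^*$) has a $k$-point, because the map $H^2(k,\mu_2)\to H^2(U,\mu_2)$ is then split by evaluation at that point.

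\textbf{Degree $2$.} This gives the second assertion. If the minimal regular resolution $\tilde X$ over $k$ has a $k$-point, we apply the same sequence to $\tilde X$ minus the exceptional curves, or directly to $\tilde X$. The torsor on $U$ extends to a $\mu_2$-torsor over a neighbourhood of the point in $\tilde X$ after twisting. More precisely, we use the sequence for the open $V=\tilde X\setminus\mathrm{Exc}$, which is isomorphic to $U$, together with a splitting coming from the $k$-point of $\tilde X$. The point may lie on an exceptional curve rather than on $U$; I will handle this via the Brauer group of $\tilde X$, which injects into that of the function field, and the fact that a $k$-point of the smooth proper $\tilde X$ splits $\operatorname{Br}(k)\to\operatorname{Br}(\tilde X)$. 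That the obstruction class comes from $\operatorname{Br}(\tilde X)$ rather than only $\operatorname{Br}(U)$ is the delicate point.

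\textbf{Degree $1$, type $+$.} Here no point hypothesis is needed, and I expect this to be the main obstacle. My first approach is to use the canonical $k$-rational point: the base point $o$ of $|-K_S|$. It is a smooth point of $S$, because the anticanonical pencil's base point on a Du Val degree-$1$ surface is smooth. It is fixed by Galois, hence $k$-rational. Thus $o\in U(k)$, the obstruction dies, and the torsor descends. The descended class is a nontrivial $\mu_2$-torsor on $U$, giving a quasi-\'etale double cover of $S$ over $k$, geometrically connected since $\tau\neq0$. The remaining care is to check that $o$ indeed lies off the singular locus in the Du Val setting. I would verify this from the Weierstrass-type model $w^2=z^3+\dots$ in $\bP(1,1,2,3)$, where $o=(0:0:1:1)$ is visibly smooth.
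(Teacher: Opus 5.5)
Your overall route is the paper's. The unique nonzero class of $\Pic(U_{\bar k})[2]=(E/R)[2]$ is Galois-invariant; you get this from uniqueness of the $2$-torsion class rather than from invariance of $\overline B$, which is equivalent. The Hochschild--Serre obstruction lies in $\operatorname{Br}(k)[2]$, and in degree $1$ it is killed by the $k$-rational base point of $|-K_S|$, which lies in $U$. Normalization then gives the quasi-\'etale cover, so your degree-$1$ case is fine. Working with $\mu_2$-torsors instead of $\mathbb G_m$ is also fine. The Hochschild--Serre sequence must be taken over $k^s$, not $\bar k$, and $H^1(U_{k^s},\mu_2)=H^1(U_{\bar k},\mu_2)$ by topological invariance of the \'etale site; this replaces the paper's appeal to Stacks Tag 0CD5.

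The degree-$2$ paragraph does not work as written, and its decisive step is left open as ``the delicate point.'' A $k$-point of $\tilde X$ on the exceptional locus does not split $H^2(k,\mu_2)\to H^2(U,\mu_2)$, so the parenthetical ``or $X$'' in your descent paragraph is wrong. The cover also need not extend \'etale near such a point, twisted or not. On $X$ it is the double cover branched along $\overline B=\sum_j\sum_{i\ \mathrm{odd}}D_{j,i}$, and the point may lie on a component of $\overline B$. Nor can you conclude with $\mu_2$-coefficients alone: $H^2(\tilde X,\mu_2)\to H^2(U,\mu_2)$ is not injective, since it kills the nonzero mod-$2$ cycle classes of the exceptional curves.

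The fix is exactly the paper's argument via Brauer groups.
\begin{itemize}
\item $\operatorname{Br}(k)\to\operatorname{Br}(\tilde X)$ is injective, because the $k$-point gives a section.
\item $\operatorname{Br}(\tilde X)\to\operatorname{Br}(U)$ is injective, because $\tilde X$ is regular and integral. So $\operatorname{Br}(\tilde X)\hookrightarrow\operatorname{Br}(k(\tilde X))$, and this map factors through $\operatorname{Br}(U)$.
\end{itemize}
Hence $\operatorname{Br}(k)\to\operatorname{Br}(U)$ is injective. The obstruction lies in $H^2(k,\mu_2)=\operatorname{Br}(k)[2]$ and dies in $H^2(U,\mu_2)$, hence in $\operatorname{Br}(U)$, so it vanishes. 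There is nothing delicate about the class ``coming from'' $\operatorname{Br}(\tilde X)$: every class of $\operatorname{Br}(k)$ does. Only regularity of $\tilde X$ is used; over an imperfect field the minimal regular resolution need not be smooth.
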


\begin{proof}
	Write $U=S_{sm}$ for the smooth part.
	Let $\rho:X\to S_{\bar k}$ be the minimal resolution,
	and identify $\rho^{-1}(U_{\bar k})$ with $U_{\bar k}$.
	On $X$, put
	\[
	\overline{B}
	=\sum_j\sum_{\substack{1\le i\le 2m_j-1\\ i\text{ odd}}}
	D_{j,i}.
	\]
	The Galois action permutes the exceptional chains
	and preserves the odd indices in the chains.
	Hence $\overline{B}$ is Galois invariant.
	It is a disjoint union of four $(-2)$-curves
	in the cases of $q=4$,
	and the calculation in Lemma~\ref{lemm:torsion}
	shows that $\mathcal O_X(\overline{B})$
	has a square root $\overline{\mathcal M}$.
	This square root is unique up to isomorphism, since
	$\operatorname{Pic}(X)$ is torsion-free,
	and is therefore Galois invariant.
	Put $\overline{\mathcal L}=\overline{\mathcal M}|_{U_{\bar k}}$.
	Then we have
	$\bar{\mathcal L}^{\otimes2}\cong\mathcal O_{U_{\bar k}}$.
	Moreover, $\overline{\mathcal L}$ is nontrivial.
	Indeed, the kernel of
	$\operatorname{Pic}(X)\to\operatorname{Pic}(U_{\bar k})$
	is the lattice $R$ generated by the $(-2)$-curves,
	whereas $[\overline{\mathcal M}]=[\overline B]/2\notin R$.
	
	Write $k^s$ for the separable closure of $k$.
	Since $S$ is geometrically normal and projective,
	and $S\setminus U$ has codimension two, we have
	$H^0(U_{k^s},\mathcal O_{U_{k^s}})=k^s$.
	As $\operatorname{char}k\neq2$, invariance of prime-to-characteristic
	torsion of Picard groups under purely inseparable extensions \cite[\href{https://stacks.math.columbia.edu/tag/0CD5}{Tag 0CD5}]{stacks-project} gives
	\[
	\operatorname{Pic}(U_{k^s})[2]
	\xrightarrow{\sim}
	\operatorname{Pic}(U_{\bar k})[2].
	\]
	Thus $\overline{\mathcal L}$ descends uniquely to a Galois-invariant
	class $[\mathcal L_s]\in\operatorname{Pic}(U_{k^s})[2]$.
	
	Let $X_0\to S$ be the minimal regular resolution over $k$.
	When $d=1$, the base point of $|-K_S|$ is a $k$-rational
	point lying in $U$, and hence lifts to $X_0$.
	Thus $X_0(k)\neq\varnothing$ in both cases.
	Therefore, the natural maps
	\[
	\operatorname{Br}(k)
	\longrightarrow\operatorname{Br}(X_0)
	\longrightarrow\operatorname{Br}(U)
	\]
	are injective.
	The Hochschild--Serre exact sequence
	\[
	0\longrightarrow\operatorname{Pic}(U)
	\longrightarrow\operatorname{Pic}(U_{k^s})^{G_k}
	\longrightarrow\operatorname{Br}(k)
	\longrightarrow\operatorname{Br}(U)
	\]
	shows that $\mathcal L_s$ descends to a line bundle
	$\mathcal L$ on $U$ satisfying
	\[
	\mathcal L^{\otimes2}\cong\mathcal O_U.
	\]
	The cyclic cover construction
	\cite[3.5]{EsnaultViehweg}
	gives a finite \'etale double cover $V\to U$.
	Since $\overline{\mathcal L}$ is nontrivial,
	this cover is geometrically integral.
	
	Let $T$ be the normalization of $S$ in $k(V)$.
	Then $T$ is normal and geometrically integral, and
	the natural morphism $\pi:T\to S$ is finite of degree $2$.
	It restricts to $V\to U$, so it is \'etale over $U$;
	it is quasi-\'etale since
	$S\setminus U$ has codimension two.
\end{proof}

\section{Unirationality of del Pezzo surfaces of degree $2$}\label{sec:2}

We collect some properties of generalized Eckardt points.

\begin{defi}
	A geometric point of a del Pezzo surface of degree two is
	called a \defterm{generalized Eckardt point} if it lies on
	four distinct $(-1)$-curves.
\end{defi}
\begin{lemm}\label{lemm:eckardt}
	Let $S$ be a del Pezzo surface of degree two over $\overline{k}$.
	Every point of $S$ lies on at most four $(-1)$-curves.
	If four distinct $(-1)$-curves $C_1,\ldots,C_4$ pass through a point $x$,
	then they meet pairwise transversely at $x$ and have
	no other intersections.
	Moreover, $C_1+\cdots+C_4\sim-2K_S$.
\end{lemm}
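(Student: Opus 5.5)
We work entirely with intersection numbers on $S$, writing $K=K_S$, so that $K^2=2$, every $(-1)$-curve $C$ satisfies $C^2=-1$ and $K\cdot C=-1$, and $\pi:S\to\bP^2$ is the anticanonical double cover with covering involution $\iota$. We also use that $(-1)$-curves come in pairs $\{C,C'\}$ with $C+C'\sim -K$, so $C\cdot C'=2$, while two distinct $(-1)$-curves that are not paired satisfy $C\cdot D\in\{0,1\}$. This last fact can be read off from $(C+D)^2\le 0$ in $K^\perp$ after correcting by $K$, or from the standard list of classes in terms of $h,e_i$.

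The plan is as follows.

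First, suppose $C_1,\dots,C_n$ are distinct $(-1)$-curves through $x$. If two of them are paired, then $C_i+C_j\sim -K$, so both map to the same line $\ell\subset\bP^2$, and the line is bitangent to the branch quartic. Their intersection $C_i\cap C_j$ lies on $R$ over the two tangency points, and $x$ is one of those points. In the other case they are pairwise non-paired, so $C_i\cdot C_j\ge1$ forces $C_i\cdot C_j=1$. That intersection is concentrated at $x$, which gives transversality at $x$ and no further intersection.

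Next, to bound $n$ and get the linear equivalence, consider $D=C_1+\dots+C_n$ with all pairs non-paired and each $C_i\cdot C_j=1$. Then
\[
D^2=-n+n(n-1)=n(n-2),\qquad K\cdot D=-n .
\]
By Hodge index, $D^2K^2\le (K\cdot D)^2$, that is, $2n(n-2)\le n^2$, so $n\le4$. Equality holds precisely when $D$ is numerically proportional to $K$, and in that case $D\equiv -2K$. Since $\Pic(S)$ is torsion-free, $D\sim -2K$.

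It remains to handle the paired case. If $C_i,C_j$ are paired and both pass through $x$, then $x\in R$ and $\pi(x)$ is a tangency point of a bitangent. Any third curve $C_k$ through $x$ maps to another bitangent through $\pi(x)$. I expect to show there are at most two such curves: a smooth quartic has at most one bitangent tangent at a given point, apart from hyperflex issues, and lines through a point of the quartic tangent elsewhere add little. So $n\le 2$ in this case, and it does not affect the four-curve statement.

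The main obstacle is exactly this exclusion of paired curves when $n=4$. The simplest approach is intersection-theoretic. If $C_1$ and $C_1'=\iota C_1$ both pass through $x$, then $x\in R$, and $\iota$ fixes $x$. So $\iota$ permutes the four curves through $x$, each $C_k$ comes with its partner $\iota C_k$, and the four curves form two paired pairs. That gives two distinct lines $\pi(C_1),\pi(C_3)$ through $\pi(x)$, both tangent to the quartic at $\pi(x)$. This is impossible because the tangent line at a smooth point of the branch curve is unique; note that $R$ is smooth since $S$ is smooth. Hence for $n=4$ all pairs are non-paired, and the computation above applies.
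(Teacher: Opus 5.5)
Your proof is correct in characteristic $\neq 2$, which is the setting where the lemma is applied. It follows a genuinely different, self-contained route from the paper's.

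The paper quotes \cite[Lemma~2.2]{STVA14} for the bound and \cite[Example~2.4]{STVA14} for $x\notin R$. It then gets transversality and the absence of further intersections from the fact that $\kappa$ is \'etale at $x$ and the $C_i$ map to four distinct lines through $\kappa(x)$. Only the last step, $D\sim-2K_S$, uses the Hodge index theorem, exactly as you do.

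You instead reprove the cited facts:
\begin{enumerate}
\item You split the curves through $x$ into paired and non-paired ones.
\item You get $C_i\cdot C_j=1$ from the lattice bound $C\cdot D\le 1$ for distinct non-paired curves. This gives transversality at $x$ and no other intersections directly.
\item You apply Hodge index to $D=\sum_i C_i$ for arbitrary $n$. So the bound $n\le 4$ and the equivalence $D\sim -2K_S$ come from the same inequality and its equality case, which ties the two assertions together nicely.
\end{enumerate}

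Your hedged middle paragraph is unnecessary. Your final argument already shows that at most one $\iota$-pair passes through a point of $R$. Since every curve through an $\iota$-fixed point comes with its partner, this gives $n\le 2$ there. Hyperflex lines cause no trouble, because they are still tangent to the quartic at $\pi(x)$.

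The tangent-line step is the only place you need the branch quartic, and it can be replaced by a one-line computation. Suppose $C_i+C_j\sim -K_S$ and $C_k$ is a third $(-1)$-curve through $x$. Then $C_k\cdot C_i\ge 1$ and $C_k\cdot C_j\ge 1$, which contradicts $C_k\cdot(C_i+C_j)=-K_S\cdot C_k=1$. With this change you need neither $R$ nor $\iota$, and the whole proof becomes purely intersection-theoretic and independent of the characteristic. The paper's route, by contrast, relies on the double-cover geometry from \cite{STVA14}.
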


\begin{proof}
	The bound follows from \cite[Lemma~2.2]{STVA14}.
	If four $(-1)$-curves $C_1,\cdots,C_4$ pass through $x$, then $x$ lies
	outside the ramification divisor of the anticanonical
	morphism $\kappa:S\to\mathbb P^2$
	by \cite[Example~2.4]{STVA14}.
	These curves map isomorphically onto four distinct
	lines through $\kappa(x)$.
	Since $\kappa$ is \'etale at $x$, the curves $C_1,\cdots,C_4$ are pairwise transverse at $x$ and have no other intersections.
	
	Put $D=C_1+\cdots+C_4$. Since $C_i^2=-1$ and
	$C_i\cdot C_j=1$ for $i\ne j$, we have
	\[
	D^2=8,\qquad (-K_S)\cdot D=4.
	\]
	As $K_S^2=2$, equality holds in the Hodge index inequality,
	so $D\equiv-2K_S$. Since numerical and linear equivalence
	coincide on the smooth rational surface $S$,
	we obtain $D\sim-2K_S$.
\end{proof}

We now prove Theorem~\ref{theo:main}.
Below is a commutative
diagram summarizing the unirationality construction.

\begin{equation}\label{diag}
	\begin{tikzcd}
		D \arrow[r,"\pi_C"] \arrow[d,hook]
		& C \arrow[d,hook]
		& E_x \arrow[l, phantom, "\simeq" description] \arrow[d,hook]
		& \\
		T \arrow[r,"\pi"]
		& S_2
		& S_1 \arrow[l,"\rho"'] \arrow[r,"\rho_1"]
		& S \\
		& Y \arrow[ul,"g"] \arrow[ur,"f"']
		& & &
	\end{tikzcd}
\end{equation}

\begin{proof}[Proof of Theorem~\ref{theo:main}]
	Let $x\in S(k)$ be a generalized Eckardt point, and let
	$C_1,\ldots,C_4$ be the four geometric $(-1)$-curves
	through $x$.
	Since there are no further $(-1)$-curves through $x$ by Lemma~\ref{lemm:eckardt}, the sum $\sum_iC_i$ is defined over $k$.

	Let $\rho_1:S_1\to S$ be the blow-up at $x$, with
	exceptional divisor $E_x$ and $\widetilde C_i$ the strict transform of $C_i$.
	If $A$ is any geometric irreducible curve on $S_1$
	other than $E_x$ and $\widetilde C_i$, then
	\[
	-2K_S\cdot\rho_1(A)
	=\sum_{i=1}^4(\widetilde C_i+E_x)\cdot A
	\ge 4E_x\cdot A.
	\]
	Therefore,
	\[
	(-K_{S_1})\cdot A
	=(-K_S)\cdot\rho_1(A)-E_x\cdot A
	\ge \frac12(-K_S)\cdot\rho_1(A)>0,
	\]
	where the last inequality follows from the ampleness
	of $-K_S$. Moreover,
	\[
	(-K_{S_1})\cdot E_x=1,
	\qquad
	(-K_{S_1})\cdot\widetilde C_i=0.
	\]
	Thus $-K_{S_1}$ is nef; it is big since $K_{S_1}^2=1$.
	Hence $S_1$ is a weak del Pezzo surface of degree one,
	whose only $(-2)$-curves are
	$\widetilde C_1,\ldots,\widetilde C_4$.

	The $1$-cycle $\sum_i\widetilde C_i$ is defined over $k$,
	and let $\rho:S_1\to S_2$ be the contraction of it.
	Then $S_2$ is a Du Val del Pezzo surface of degree $1$
	with four $A_1$ singularities.
	Since $E_x$ meets each $\widetilde C_i$ once,
	Proposition~\ref{prop:distinguished-curve} shows that
	$S_2$ is a generalized Iskovskikh surface of type $+$.
	Thus there is a quasi-\'etale double cover $\pi:T\ra S_2$
	defined over $k$ by Corollary~\ref{coro:cover}.
	
	Put $B=\sum_i\widetilde C_i$ as in the proof of Corollary~\ref{coro:cover}. Then
	$B\sim-2(K_{S_1}+E_x)$.
	Let $Y$ be the normalization of $S_1$ in $k(T)$,
	with induced morphisms $f:Y\to S_1$ and $g:Y\to T$.
	By the construction of $\pi$, the morphism $f$ is
	the double cover associated with
	$\mathcal O_{S_1}(-K_{S_1}-E_x)$ and branched along $B$.
	Since $B$ is geometrically a disjoint union of smooth curves,
	$Y$ is smooth.
	
	The curves $F_i=(f^{-1}\widetilde C_i)_{\mathrm{red}}$
	are disjoint $(-1)$-curves, and $g$ contracts precisely
	these curves. Thus $T$ is smooth.
	Since $\pi$ is quasi-\'etale, 
%$K_T=\pi^*K_{S_2}$,
	$T$ is a del Pezzo surface of degree $2$.

	Let $C\subset S_2$ be the image of $E_x$, and let
	$D\subset T$ be the reduced inverse image of $C$ under $\pi$.
	The curve $\widetilde D=f^{-1}(E_x)$ is smooth of genus $1$,
	since it is a double cover of $E_x\simeq\mathbb P^1_k$
	branched at four distinct geometric points.
	It meets each $F_i$ transversely once, so its image $D$
	is also smooth.
	By \eqref{eq:Z} and the canonical divisor formula,
	\[
	\widetilde D=f^*E_x\sim-K_Y,
	\qquad
	g^*D=\widetilde D+\sum_iF_i\sim-g^*K_T.
	\]
	Hence $D\in|-K_T|$.
	Let $R$ be the ramification divisor of the
	anticanonical morphism of $T$.
	Since $R$ is geometrically integral of genus $3$,
	it does not contain $D$,
	and so $D\cap R$ is finite.
	
	The curve $C^\circ=C\cap(S_2)_{\mathrm{sm}}$ is isomorphic
	to an open subset of $E_x\simeq\mathbb P^1_k$.
	Put $\pi_C=\pi|_D:D\to C$.
	Since $k$ is infinite and $T$ has only finitely many
	generalized Eckardt points, we may choose $p\in C^\circ(k)$
	such that both geometric points of $\pi_C^{-1}(p)$
	lie outside $R$ and are not generalized Eckardt points.
	
	Let $\sigma\in H^1(k,\mathbb Z/2\mathbb Z)$ be the class
	of the fibre $\pi^{-1}(p)$.
	Twisting $\pi$ by $\sigma$ gives a double cover
	$\pi^\sigma:T^\sigma\to S_2$ whose fibre above $p$ is split.
	In particular, it contains a $k$-rational point $q$.
	The anticanonical ramification divisor and the set
	of generalized Eckardt points are preserved by
	automorphisms, so $q$ satisfies the hypotheses of
	\cite[Corollary~3.3]{STVA14}.
	Thus $T^\sigma$ is $k$-unirational.
	Since $T^\sigma$ dominates $S_2$, which is $k$-birational
	to $S$, the surface $S$ is also $k$-unirational.
\end{proof}

\begin{proof}[Proof of Corollary~\ref{coro:unirationality}]
	If $k$ is finite, the assertion follows from
	\cite[Theorem~1.1]{FvL16}.
	Suppose that $k$ is infinite, and choose
	$P\in(S\setminus R)(k)$.
	If $P$ is a generalized Eckardt point, apply
	Theorem~\ref{theo:main}.
	Otherwise, apply \cite[Corollary~3.3]{STVA14}.
\end{proof}

\section{Other related constructions}
\label{sec:3}
In this section, we work over $\bC$.
Let $\Gamma\subset\bP^2$ be a smooth plane quartic
with a bielliptic involution $\tau$, and let
$\rho:\Gamma\to E=\Gamma/\langle\tau\rangle$
be the quotient map.
Projection from the isolated fixed point of the
induced involution of $\bP^2$ gives a double cover
$\mu:E\to\bP^1$.
Pantazis's bigonal construction is the following transformation
of bielliptic quartics:
\begin{equation}\label{eq:bigonal}
	(\Gamma,\tau)\longmapsto
	\left(
	\widehat{\Gamma}
	:=
	\{P+Q\in \operatorname{Div}^{(2)}(\Gamma)\mid
	\mathcal O_E(\rho(P)+\rho(Q))
	\simeq\mu^*\mathcal O_{\mathbb P^1}(1)\},
	\ \tau^*|_{\widehat{\Gamma}}
	\right),
\end{equation}
see \cite[proof of Proposition~2.1]{Menet}.
Here $\operatorname{Div}^{(2)}(\Gamma)$ denotes
the space of effective divisors of degree $2$ on $\Gamma$.
The following proposition identifies this transformation
with the construction in \eqref{diag}.

\begin{prop}
	\label{prop:inverse-construction}
	Let $S$ be a smooth del Pezzo surface of degree $2$
	over $\bC$ with a generalized Eckardt point $p$,
	and let $T$ be obtained from $(S,p)$ by the
	construction in \eqref{diag}.
	Then $T$ also admits a generalized Eckardt point.
	Moreover, the bigonal transformation \eqref{eq:bigonal}
	sends the anticanonical branch quartic of $T$,
	with the involution induced by $T\to S_2$,
	to that of $S$, with the bielliptic involution
	determined by $p$.
\end{prop}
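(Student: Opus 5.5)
The plan has two halves: first find the generalized Eckardt point on $T$, then match the two anticanonical branch quartics with the bigonal construction, working throughout with the diagram \eqref{diag} over $\bC$.

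\textbf{Eckardt point on $T$.} Let $D\subset T$ be the anticanonical curve constructed in the proof of Theorem~\ref{theo:main}. Recall that $g^*D=\widetilde D+\sum_iF_i$, where the $F_i$ are the four disjoint $(-1)$-curves of $Y$ contracted by $g$. Write $y_i=g(F_i)$, so the $y_i$ are the four points of $T$ over the four $A_1$ points of $S_2$.

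The deck involution $\iota$ of $\pi$ acts on $T$ and fixes exactly $y_1,\dots,y_4$; this is because $\pi$ is étale over $(S_2)_{\rm sm}$. I expect $\iota$ to commute with the anticanonical double cover $\kappa_T:T\to\bP^2$. It should therefore induce an involution of $\bP^2$ which preserves the branch quartic $\Gamma_T$. That involution is a homology with an isolated fixed point $o$ and a fixed line $\ell$.

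Next consider the pencil of lines through $o$. Its pullback to $T$ is an $\iota$-invariant pencil of anticanonical curves. This pencil is $\pi^*$ of the pencil $|-K_{S_2}|$, which is the image of $|-K_{S_1}-E_x|$, i.e. of the pencil of lines through the Eckardt point in $S$.

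The members of $|-K_S|$ through $x$ that split are exactly $C_i+C_i'$, where $C_i'$ is the Geiser-conjugate $(-1)$-curve of $C_i$. On $S_1$ these become $\widetilde C_i+\widetilde C_i'$. On $Y$, where $\widetilde C_i$ is a branch curve, I will compute that $f^*\widetilde C_i'$ splits into two disjoint $(-1)$-curves. Their images in $T$ give pairs of $(-1)$-curves through $y_i$.

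My proposed route is to show that $y_j$ lies on four $(-1)$-curves. The candidates are the images of components of $f^{-1}(\widetilde C_i')$ for $i\ne j$ that pass through $y_j$. Equivalently, I will show that $\kappa_T(y_j)$ lies on four bitangent lines of $\Gamma_T$, using Lemma~\ref{lemm:eckardt}.

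A cleaner alternative is to use the involution: $o$ has four tangent lines to $\Gamma_T$, coming from the four ramification points of $E_T\to\bP^1$. The fixed points of $\iota$ lying over $o$ should then be a generalized Eckardt point. This identification is the step I expect to be the main obstacle, and I would verify it by intersection numbers on $Y$.

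\textbf{Bigonal duality.} With a generalized Eckardt point on each side, both quartics $\Gamma_S$ and $\Gamma_T$ are bielliptic. On $S$, the involution is induced by the projection from $\kappa(x)$, because the four lines $\kappa(C_i)$ are bitangents through one point.

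I would compare the double covers $\Gamma_T\to E_T\to\bP^1$ and $\Gamma_S\to E_S\to\bP^1$, both over the pencil $\bP^1=|-K_{S_2}|$. For a line $L$ through $o$, the fibre of $\widehat{\Gamma}_T\to\bP^1$ consists of pairs $P+Q$ with $\rho(P)+\rho(Q)=\mu^*(\text{pt})$. These correspond to the two ways of choosing one point of $\Gamma_T$ over each of the two points of $E_T$ in that fibre, taken modulo $\tau$.

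On the $S$ side, this choice corresponds to the $2$-torsion data of the quasi-étale cover restricted to the elliptic fibre $\widetilde D_t$ of $Y$. I would show that the branch points of $\widehat\Gamma_T\to\bP^1$ and of $\Gamma_S\to\bP^1$ coincide: both are the tangency lines together with the images of the four $A_1$ points. Together with matching the monodromy, this identifies $\widehat\Gamma_T\cong\Gamma_S$ compatibly with the involutions.

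Finally, Pantazis's bigonal construction is an involution on towers of double covers over $\bP^1$ that swaps the "bigonal" branch data. So it suffices to check that the tower $\Gamma_S\to E_S\to\bP^1$ is the bigonal dual of the tower $\Gamma_T\to E_T\to\bP^1$. I would do this by comparing the ramification of the two towers fibrewise over the pencil.
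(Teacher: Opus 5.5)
\textbf{Part 1 (Eckardt point on $T$).} Your primary route aims at the wrong points. Since $\iota$ has only isolated fixed points, it acts on the genus-one curve $\kappa_T^{-1}(\ell)$ as the covering involution. Hence the four fixed points $y_j$ are exactly the ramification points of $\kappa_T$ over $\ell\cap\Gamma_T$. By Lemma~\ref{lemm:eckardt}, such points are not generalized Eckardt points, and $\iota$ interchanges the two points over $o$ rather than fixing them.

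The splitting you plan to compute also fails. We have $f^*E_x\sim-K_Y$ and $E_x\cap\widetilde C_i'=\varnothing$, so every component of $f^{-1}(\widetilde C_i')$ is $K_Y$-trivial and none is a $(-1)$-curve. In general $f^{-1}(\widetilde C_i')$ is an irreducible $(-2)$-curve meeting $F_i$ twice. Its image is the nodal member of $|-K_T|$ over the line through $o$ and $\kappa_T(y_i)$, which is tangent to $\Gamma_T$ there. A smaller slip: the relevant pencil on $S_1$ is $|-K_{S_1}|$. The system $|-K_{S_1}-E_x|$ is empty, since $\kappa$ is \'etale at $x$.

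Your ``cleaner alternative'' is the right idea once stated correctly. Since $\iota\ne\gamma_T$, it induces a nontrivial involution of $\bP^2$ preserving $\Gamma_T$, and smoothness of $\Gamma_T$ forces $o\notin\Gamma_T$. The four branch points of $E_T\to\bP^1$ give four \emph{bitangents} through $o$. The two points over $o$ are then generalized Eckardt points; they are fixed by $\iota\circ\gamma_T$, not by $\iota$. This is Kowalevski's theorem, which is what the paper cites.

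\textbf{Part 2 (bigonal identification).} This is the substantive gap. Agreement of the branch points of $\widehat\Gamma_T\to\bP^1$ and $\Gamma_S\to\bP^1$ only identifies the elliptic quotients $\widehat E_T\cong E_S$. The double cover on top still depends on which point of each relevant fibre is branched, and on a choice among four square roots of the branch divisor class. So branch data do not determine the tower. ``Matching the monodromy'' is precisely the content of the claim, and neither it nor your heuristic about $2$-torsion on the fibres $\widetilde D_t$ is carried out.

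The paper supplies this step by explicit equations:
\begin{enumerate}
\item Write $T: w^2+z^4+z^2q_2+q_4=0$ with $\iota(z,w)=(-z,-w)$. Then $a=z^2$, $b=zw$ give $S_2: b^2+a^3+q_2a^2+q_4a=0$.
\item The strict transform of $\{a=b=0\}$ is a $(-1)$-curve meeting all four $A_1$-curves. By the uniqueness in Proposition~\ref{prop:distinguished-curve}, it is $E_x$.
\item The sections $r=b/a$, $x$, $y$ give the anticanonical model $W^2=(r^2+q_2)^2-4q_4$ of $S$.
\item Vieta's formulas for $z(P)^2,z(Q)^2$ give $\widehat\Gamma_T:(t^2+q_2)^2=4q_4$ with $t=z(P)+z(Q)$.
\item Under this identification $\tau^*$ becomes $r\mapsto-r$, the involution determined by the projection from $\kappa_S(p)=(1:0:0)$.
\end{enumerate}
You need this computation, or a genuine substitute that actually pins down the tower, not just its branch locus.
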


\begin{proof}
	Let $\iota$ be the deck involution of
	$\pi:T\to S_2$.
	Since $\iota$ is distinct from the Geiser involution,
	it induces a nontrivial involution $\tau$ on the
	branch quartic $\Gamma$ of $T$.
	Thus $\Gamma$ is bielliptic, so $T$ admits a
	generalized Eckardt point by the classical result of
	Kowalevski \cite{Kowalevski1884}
	(see also \cite[Theorem~3.6]{JPZ25}).
	As in \cite[Example~2.4]{STVA14},
	we may choose weighted homogeneous coordinates
	such that
	\[
	T=V(w^2+z^4+z^2q_2(x,y)+q_4(x,y))
	\subset\bP(1,1,1,2),
	\qquad
	\iota(x:y:z:w)=(x:y:-z:-w).
	\]
	Putting $a=z^2$ and $b=zw$, we obtain
	\[
	S_2=V(b^2+a^3+q_2a^2+q_4a)\subset
	\bP(1,1,2,3).
	\]
	The fixed locus of $\iota$ consists of the four
	points $\{z=w=0\}$, whose images are precisely
	the four $A_1$ singularities of $S_2$.
	
	Let $\rho:S_1\to S_2$ be the minimal resolution,
	with exceptional curves $A_1,\ldots,A_4$,
	and let $E$ be the strict transform of
	$C=\{a=b=0\}$.
	Then $E\simeq\bP^1$, $E\cdot A_i=1$, and
	\[
	2E+\sum_iA_i\sim-2K_{S_1}.
	\]
	Hence $E^2=-1$.
	By the uniqueness in
	Proposition~\ref{prop:distinguished-curve},
	$E$ is the exceptional curve of the original
	blow-up $\rho_1:S_1\to S$ at $p$.
	
	Put $r=b/a$.
	As a rational section of $\mathcal O_{S_1}(-K_{S_1})$,
	$r$ has polar divisor $E$.
	Since
	\(
	\rho_1^*(-K_S)=-K_{S_1}+E,
	\)
	the three linearly independent sections $r,x,y$
	form a basis of $H^0(S_1,-K_{S_1}+E)$
	and give the anticanonical map of $S$.
	Substituting $b=ar$ into the equation of $S_2$
	and dividing by $a$, we obtain
	\[
	a^2+(q_2+r^2)a+q_4=0.
	\]
	Thus, putting $W=2a+q_2+r^2$, we obtain
	the anticanonical model
	\begin{equation}\label{eq:anticanonical}
		S:\quad W^2=(r^2+q_2)^2-4q_4.
	\end{equation}
	
	On the chart $y=1$, let $P+Q$ be a general point
	of $\widehat{\Gamma}$.
	By its definition, $z(P)^2$ and $z(Q)^2$
	are the two roots of
	$a^2+q_2a+q_4=0$.
	Vieta's formulas give
	\[
	z(P)^2+z(Q)^2=-q_2,
	\qquad
	z(P)^2z(Q)^2=q_4.
	\]
	Thus $t=z(P)+z(Q)$ gives the plane model
	\[
	\widehat{\Gamma}:\quad (t^2+q_2)^2=4q_4.
	\]
	The unordered pair $\{z(P),z(Q)\}$ is recovered
	from $t$ and $(t^2+q_2)/2$.
	This identifies $\widehat{\Gamma}$ with the
	branch quartic of \eqref{eq:anticanonical}.
	Under this identification, $\tau^*$ acts by
	$(r:x:y)\mapsto(-r:x:y)$.
	Since $x$ and $y$ vanish along $E$, the
	anticanonical image of $p$ is $(1:0:0)$ (also see \cite[Example~2.4]{STVA14}).
	Thus $\tau^*$ is the bielliptic involution determined by $p$.
\end{proof}

\normalem
\bibliographystyle{alpha}
\bibliography{myref25}

\begin{thebibliography}{CLOP26}

\bibitem[BH23]{BlacheHallouin}
R{\'e}gis Blache and Emmanuel Hallouin.
\newblock Classification of singular del {Pezzo} surfaces over finite fields,
  2023.

\bibitem[CLOP26]{cheltsov2026unirational}
Ivan Cheltsov, Konstantin Loginov, Dmitri Orlov, and Yuri Prokhorov.
\newblock Unirational del pezzo surfaces of degree one.
\newblock {\em arXiv preprint arXiv:2608.15435}, 2026.

\bibitem[CP21]{CheltsovProkhorov2021}
Ivan Cheltsov and Yuri Prokhorov.
\newblock {Del Pezzo} surfaces with infinite automorphism groups.
\newblock {\em Algebraic Geometry}, 8(3):319--357, 2021.

\bibitem[CT88]{CorayTsfasman1988}
D.~F. Coray and M.~A. Tsfasman.
\newblock Arithmetic on singular {Del Pezzo} surfaces.
\newblock {\em Proceedings of the London Mathematical Society}, 57(1):25--87,
  1988.

\bibitem[Dem80]{Demazure}
Michel Demazure.
\newblock Surfaces de {Del Pezzo}, {I--V}.
\newblock In Michel Demazure, Henry~Charles Pinkham, and Bernard Teissier,
  editors, {\em S{\'e}minaire sur les Singularit{\'e}s des Surfaces}, volume
  777 of {\em Lecture Notes in Mathematics}, pages 21--69. Springer-Verlag,
  Berlin, 1980.

\bibitem[DM10]{DynkinMinchenko2010}
E.~B. Dynkin and A.~N. Minchenko.
\newblock Enhanced {Dynkin} diagrams and {Weyl} orbits.
\newblock {\em Transformation Groups}, 15(4):813--841, 2010.

\bibitem[EV92]{EsnaultViehweg}
H{\'e}l{\`e}ne Esnault and Eckart Viehweg.
\newblock {\em Lectures on Vanishing Theorems}, volume~20 of {\em DMV Seminar}.
\newblock Birkh{\"a}user Verlag, Basel, 1992.

\bibitem[FvL16]{FvL16}
Dino Festi and Ronald van Luijk.
\newblock Unirationality of {del Pezzo} surfaces of degree 2 over finite
  fields.
\newblock {\em Bulletin of the London Mathematical Society}, 48(1):135--140,
  2016.

\bibitem[Gao24]{gao2024quasi}
Runxuan Gao.
\newblock Quasi-\'etale covers of {D}u {V}al del {P}ezzo surfaces and {Z}ariski
  dense exceptional sets in {M}anin's conjecture.
\newblock {\em arXiv preprint arXiv:2406.16240}, 2024.

\bibitem[JPZ25]{JPZ25}
Marek Janasz, Piotr Pokora, and Marcin Zieli{\'n}ski.
\newblock On arrangements of smooth plane quartics and their bitangents.
\newblock {\em Fundamenta Mathematicae}, 268:151--170, 2025.

\bibitem[Kow84]{Kowalevski1884}
Sophie Kowalevski.
\newblock {\"Uber die Reduction einer bestimmten Klasse Abel'scher Integrale
  $3^{\mathrm{ten}}$ Ranges auf elliptische Integrale}.
\newblock {\em Acta Mathematica}, 4:393--414, 1884.

\bibitem[Man86]{manin1986cubicforms}
Yu~I Manin.
\newblock {\em Cubic forms: algebra, geometry, arithmetic}, volume~4.
\newblock Elsevier, 1986.

\bibitem[Men14]{Menet}
Gr{\'e}goire Menet.
\newblock Duality for relative {Prymians} associated to {K3} double covers of
  {del Pezzo} surfaces of degree 2.
\newblock {\em Mathematische Zeitschrift}, 277(3--4):893--907, 2014.

\bibitem[{Sta}18]{stacks-project}
The {Stacks Project Authors}.
\newblock \textit{Stacks Project}.
\newblock \url{https://stacks.math.columbia.edu}, 2018.

\bibitem[STVA14]{STVA14}
Cec{\'i}lia Salgado, Damiano Testa, and Anthony V{\'a}rilly-Alvarado.
\newblock On the unirationality of {del Pezzo} surfaces of degree 2.
\newblock {\em J. Lond. Math. Soc. (2)}, 90(1):121--139, 2014.

\bibitem[VA13]{VarillyAlvarado2013}
Anthony V{\'a}rilly-Alvarado.
\newblock Arithmetic of {del Pezzo} surfaces.
\newblock In Fedor Bogomolov, Brendan Hassett, and Yuri Tschinkel, editors,
  {\em Birational Geometry, Rational Curves, and Arithmetic}, volume~1 of {\em
  Simons Symposia}, pages 293--319. Springer, New York, 2013.

\end{thebibliography}

\end{document}